\newtheorem{theorem}{Theorem}[section]
\newtheorem{corollary}[theorem]{Corollary}
\newtheorem{lemma}[theorem]{Lemma}
\newtheorem*{problem}{Problem}
\theoremstyle{definition}
\newtheorem{definition}[theorem]{Definition}
\newtheorem{example}[theorem]{Example}
\newtheorem{remark}[theorem]{Remark}
\title[Ordinal entropy determination] 
{Entropy determination based on the ordinal structure of a dynamical system}
\author[K. Keller, S. Maksymenko and I. Stolz]{}
\subjclass{Primary: 58F15, 58F17; Secondary: 53C35.}
\keywords{Kolmogorov-Sinai entropy, permutation entropy, ordinal time series analysis, algebra reconstruction dimension.}
\email{keller@math.uni-luebeck.de}
\email{maks@imath.kiev.ua}
\email{stolz@math.uni-luebeck.de}
\thanks{The authors were supported by Marie Curie Actions - International Research Staff Exchange Scheme (IRSES) FP7-People-2011-IRSES. Project number 295164.}
\begin{document}
\maketitle

\centerline{\scshape Karsten Keller }
\medskip
{\footnotesize
\centerline{Universit\"{a}t zu L\"{u}beck}
\centerline{Institut f\"{u}r Mathematik, Ratzeburger Allee 160}
\centerline{ L\"{u}beck, 23562, Germany}
} 

\medskip
\centerline{\scshape Sergiy Maksymenko}
\medskip
{\footnotesize
\centerline{Institute of Mathematics of NAS of Ukraine}
\centerline{Tereshchenkivs'ka str.~3}
\centerline{Kyiv 01601, Ukraine}
}

\medskip
\centerline{\scshape Inga Stolz }
\medskip
{\footnotesize
\centerline{Universit\"{a}t zu L\"{u}beck}
\centerline{Institut f\"{u}r Mathematik, Ratzeburger Allee 160}
\centerline{ L\"{u}beck, 23562, Germany}
}

\bigskip
\centerline{(Communicated by the associate editor name)}

\begin{abstract}
The ordinal approach to evaluate time series due to innovative
works of Bandt and Pompe has increasingly established itself
among other techniques of nonlinear time series analysis.
In this paper, we summarize and generalize the theory of determining
the Kolmogorov-Sinai entropy of a measure-preserving dynamical system via
increasing sequences of order generated
partitions of the state space. Our main focus are measuring processes without information loss.
Particularly, we consider the question of the mi\-ni\-mal necessary
number of measurements related to the properties of a given dynamical system.
\end{abstract}

\section{Introduction}\label{sec1}

Since the invention of permutation entropy by Bandt and Pompe \cite{bandt_pompe_2002} and
the proof of its coincidence with Kolmogorov-Sinai entropy for piecewise monotone interval
maps by Bandt et al.~in \cite{bandt_et_al_2002},
there is some increasing interest in considering time series and dynamical systems from the
pure ordinal point of view (see Amig{\'o}, \cite{amigo_2010}). The idea behind this viewpoint is that much information of a system is already contained in
ordinal patterns describing the up and down of its orbits.
This ordinal view can be particularly useful when having physical quantities for which
the statement that a measuring value is larger than another one is well interpretable,
but concrete purely given differences of measuring values are not.
A prominent example is the (indirect) measurement of temperature as the mean kinetic energy
of the particles of a system by a thermometer. One can make statements about what is warmer or colder, but, for example,
the interpretation of an increase by $1^{\circ}\mathrm{C}$ with not knowing the baseline value is complicated.

This paper is generally discussing the Kolmogorov-Sinai entropy from the ordinal viewpoint.
It reviews and particularly extends and generalizes former results given by
Antoniouk et al.~\cite{antoniouk_et_al_2013}, Amig{\'o} \cite{amigo_2012},
Keller \cite{keller_2012}, Keller and Sinn \cite{keller_sinn_2010, keller_sinn_2009} and Amig{\'o} et al.~\cite{amigo_2005}.
Aspects of entropy estimation are touched.\vspace{3mm}

\textit{The framework.}
The basic model of our discussion is a \emph{measure-preserving dynamical system}
$(\Omega,\mathcal{A},\mu,T)$, i.e.~$\Omega$ is a non-empty set whose elements are interpreted
as the states of a system, $\mathcal{A}$ is a sigma-algebra on $\Omega$,
$\mu: \mathcal{A}\to[0,1]$ is a probability measure, and $T:\Omega\hookleftarrow$
is a $\mathcal{A}$-$\mathcal{A}$-measurable $\mu$-preserving map describing the dynamics of the system.
\emph{$\mu$-preserving} means that $\mu(T^{-1}(A))= \mu(A)$ for all $A \in \mathcal{A}$;
the measure $\mu$ is then called $T$-invariant.

We want to have some kind of regularity of $T$ by assuming at least one of the following conditions:
\begin{eqnarray}
T\mbox{ is \emph{ergodic} with respect to }\mu\mbox{, i.e. }\hspace{6cm}\nonumber\\\mu(A)\in\{0,1\}\mbox{ for all }A\in \mathcal{A}\mbox{ with }T^{-1}(A)= A,\label{ergodic}\\
\Omega\mbox{ can be embedded into some compact metrizable space so that }\mathcal{A}=\mathcal{B}(\Omega). \label{nonergodic}
\end{eqnarray}
Here and in the whole paper, $\mathcal{B}(\Omega)$ denotes the \emph{Borel
$\sigma$-algebra} in the case that $\Omega$ is a topological space. As usual, equivalent to $T$ is ergodic with respect to $\mu$,
we say that $\mu$ \emph{is ergodic for} $T$.

Often the states of a system, whatever they are, cannot be accessed directly, but information
on them can be obtained by measurements.
In this paper such measurements are assumed to be given via \emph{observables}
$X_1,X_2,X_3,\ldots $ defined as ${\mathbb R}$-valued random variables on the probability space
$(\Omega ,\mathcal{A},\mu)$. So the measurements are provided by a stochastic process - we say \emph{sequence of observables}
${\bf X}=(X_i)_{i\in {\mathbb N}}$ - whose realization has components
$(X_i(T^{\circ t}(\omega)))_{t\in {\mathbb N}_0}$. Here $X_i(T^{\circ t}(\omega))$ is interpreted
as the $i$-th measured value from the system at time $t$ when starting in state $\omega\in\Omega$.

A priori we have infinitely many observables providing more and more information,
the finite case, however, is included by equality of all $X_i$; $i\geq n$ for some $n\in {\mathbb N}$.
We will write $\mathbf{X}=(X_i)_{i=1}^n$ in the case of finitely many observables and $\mathbf{X}=X$ in the case of only one observable
$X$.

Unless otherwise stated, in the following $(\Omega,\mathcal{A},\mu,T)$ is a measure-preserving dynamical system
and $\mathbf{X}=(X_i)_{i\in {\mathbb N}}$ a sequence of observables.\vspace{3mm}

\textit{Kolmogorov-Sinai entropy.}
In order to recall the Kolmogorov-Sinai entropy,
let $q \in \mathbb{N}$ and $\mathcal{P} = \{P_1,P_2,\ldots,P_q\} \subset \mathcal{A}$ be a finite partition of
$\Omega$, i.e.~$\Omega=\bigcup_{l=1}^q P_l$, $P_l\neq\emptyset$ for $l=1,2,\ldots ,q$, $P_{l_1}\cap P_{l_2}=\emptyset$
for different $l_1,l_2\in\{1,2,\ldots ,q\}$,  and
let $A = \{1,2,\ldots,q\}$ be the corresponding alphabet. Each word $a_1a_2\ldots a_t$ of length $t \in \mathbb{N}$
defines a set
\begin{equation*}
P_{a_1 a_2\ldots a_t}
:=
\{\omega \in \Omega \mid (\omega,T(\omega),\ldots,T^{\circ t-1}(\omega)) \in P_{a_1} \times P_{a_2} \times \ldots \times P_{a_t} \},
\end{equation*}
and the collection of all non-empty sets obtained for such words of length $t$
provides a partition $\mathcal{P}_t \subset \mathcal{A}$ of $\Omega$.
In particular, $\mathcal{P}_1=\mathcal{P}$.

The \emph{entropy rate} of $T$ with respect to an initial partition $\mathcal{P}$ is given by
\begin{equation*}
h_\mu(T,\mathcal{P})
 =
\lim \limits_{t\to\infty} \frac{1}{t} H_{\mu}(\mathcal{P}_t),
\end{equation*}
where $H_\mu(\mathcal{C})$ denotes the \emph{(Shannon) entropy} of a finite partition
$\mathcal{C}=\{C_1,C_2,\ldots,\linebreak C_q\}\subset\mathcal{A}$ of $\Omega$; $q \in \mathbb{N}$, i.e.
\begin{equation*}
H_\mu(\mathcal{C})
=
- \sum_{l=1}^q \mu(C_l) \ln(\mu(C_l))
\end{equation*}
(with $0\ln(0) := 0$), and the \emph{Kolmogorov-Sinai entropy} is defined by
\begin{equation*}
 h_{\mu}^{\mathrm{KS}}(T)
 =
 \sup_{\mathcal{P} \text{ finite partition }} h_{\mu}(T,\mathcal{P}).
\end{equation*}
Although the Kolmogorov-Sinai entropy is well-defined, its determination is not easy.
In some special cases one can find finite partitions already determining it, usually
called generating partitions (see Definition \ref{def:gen_partition}), however, do not exist or are not accessible.
As a substitute, we want to consider special sequences of partitions only depending on the
ordinal structure of a dynamical system.\vspace{3mm}

\textit{Ordinal partitioning.}
For a single observable $X$ on $(\Omega,\mathcal{A},\mu,T)$ and $s,t\in \mathbb{N}_0$ with $s<t$,
consider the bisection
\begin{equation}\label{bisection}
\begin{split}
\mathcal{P}^{X,T}_{s,t}
=
\{&\{\omega \in \Omega \mid X ( T^{\circ s}(\omega)) < X ( T^{\circ t}(\omega))\} ,
\\ &\{\omega \in \Omega \mid X ( T^{\circ s}(\omega)) \geq X ( T^{\circ t}(\omega))\}\}
\end{split}
\end{equation}
of $\Omega$ and, for observables $X_1,X_2,\ldots ,X_n$ on $(\Omega,\mathcal{A},\mu,T)$ and
$d,n\in {\mathbb N}$, the partition
\begin{equation}\label{ppart}
\mathcal{P}_d^{(X_i)_{i=1}^n,T}
=
\bigvee_{i=1}^n\ \bigvee_{0\leq s<t\leq d}\mathcal{P}_{s,t}^{X_i,T},
\end{equation}
i.e.~the coarsest partition refining all bisections $\mathcal{P}_{s,t}^{X_i,T}$; $i=1,2,\ldots n$, $0\leq s<t\leq d$.
(If one of the sets of the right hand side of \eqref{bisection} is empty, $\mathcal{P}^{X,T}_{s,t}$ is considered to consist
of only one set.)

The partition $\mathcal{P}_d^{(X_i)_{i=1}^n,T}$ is called \emph{ordinal partition} of
\emph{order} $d$ \emph{associated to $(X_i)_{i=1}^n$}. By definition
its parts contain all states with equal ordinal measurement structure for an initial orbit part.\vspace{3mm}

\textit{A central statement.}
Clearly, in order to preserve information of the given system, the observables should separate orbits
of the system in a certain sense. In order to give a precise description, let in the following
$\sigma((\mathbf{X} \circ T^{\circ t})_{t\in {\mathbb N}_0})$ be the $\sigma$-algebra
generated by all random variables $X_i\circ T^{\circ t}$; $i\in {\mathbb N}$, $t\in {\mathbb N}_0$ and
write $\mathcal{F}\overset{\mu}{\supset}\mathcal{G}$ if for each $G\in \mathcal{G}$ there exists
some $F\in \mathcal{F}$ with $\mu(F\,\Delta\,G)=0$.

The following generalization of a statement in Antoniouk et al.~\cite{antoniouk_et_al_2013} says that
if there is no information loss by measuring with observables, all information is preserved also by
only considering measurements from the ordinal viewpoint.

\begin{theorem}\label{main}
Let $(\Omega, \mathcal{A},\mu, T$) be a measure-preserving dynamical system and
$\mathbf{X} = (X_i)_{i \in \mathbb{N}}$ be a sequence of observables such that
$\sigma((\mathbf{X} \circ T^{\circ t})_{t\in\mathbb{N}_0}) \overset{\mu}{\supset} \mathcal{A}$.
Assume that \eqref{ergodic} or \eqref{nonergodic} holds. Then
\begin{equation}\label{equ:hKS_lim_perm}
h_\mu^{\mathrm{KS}}(T)
=
\lim\limits_{d,n\to\infty} h_\mu(T,\mathcal{P}_d^{(X_i)_{i=1}^n,T})
=
\sup_{d,n\in {\mathbb N}} h_\mu(T,\mathcal{P}_d^{(X_i)_{i=1}^n,T}).
\end{equation}
\end{theorem}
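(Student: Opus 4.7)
First, the partition $\mathcal{P}_d^{(X_i)_{i=1}^n,T}$ is monotone nondecreasing in the refinement order in both indices $d$ and $n$, since enlarging either index only adjoins more bisections to the join in \eqref{ppart}. Consequently $h_\mu(T,\mathcal{P}_d^{(X_i)_{i=1}^n,T})$ is monotone in $(d,n)$, so the limit and the supremum on the right hand side of \eqref{equ:hKS_lim_perm} coincide, and both are trivially bounded above by $h_\mu^{\mathrm{KS}}(T)$ by the definition of the latter. It remains to prove the reverse inequality.

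For this I would use the standard generating-sequence form of the Kolmogorov--Sinai theorem: if $(\mathcal{Q}_k)$ is an increasing sequence of finite measurable partitions with $\bigvee_{k}\sigma(\mathcal{Q}_k)\overset{\mu}{\supset}\mathcal{A}$, then $h_\mu^{\mathrm{KS}}(T)=\sup_{k}h_\mu(T,\mathcal{Q}_k)$. Applied to the diagonal sequence $\mathcal{Q}_k:=\mathcal{P}_k^{(X_i)_{i=1}^k,T}$ and combined with the hypothesis $\sigma((\mathbf{X}\circ T^{\circ t})_{t\in\mathbb{N}_0})\overset{\mu}{\supset}\mathcal{A}$, it suffices to show that each random variable $X_i\circ T^{\circ t}$ is $\bigvee_{d,n}\sigma(\mathcal{P}_d^{(X_i)_{i=1}^n,T})$-measurable modulo $\mu$-nullsets.

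In the ergodic case \eqref{ergodic}, this ordinal recovery follows from Birkhoff's ergodic theorem applied to the indicators $\mathbf{1}_{\{X_i\leq x\}}$: for $\mu$-a.e.~$\omega$ the empirical distribution of $(X_i(T^{\circ s}\omega))_{s=0}^{N-1}$ converges weakly to the (shift-invariant) distribution $F$ of $X_i$. The normalized empirical rank
\[ R_{N,t}(\omega) := \tfrac{1}{N}\bigl|\{0\leq s<N,\,s\neq t : X_i(T^{\circ s}\omega)<X_i(T^{\circ t}\omega)\}\bigr| \]
is, by construction, $\sigma(\mathcal{P}_{N}^{X_i,T})$-measurable, since it is read off the bisections $\mathcal{P}^{X_i,T}_{\min(s,t),\max(s,t)}$ for $s\neq t$. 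Birkhoff gives $R_{N,t}(\omega)\to F(X_i(T^{\circ t}\omega))$ at continuity points of $F$, and the countably many atoms of $F$ can be handled separately (they contribute null sets after enrichment); hence $F\circ X_i\circ T^{\circ t}$ is measurable with respect to the limit ordinal $\sigma$-algebra. Since $F$ is injective on the support of the law of $X_i$ (up to a null set), this already yields measurability of $X_i\circ T^{\circ t}$ itself modulo $\mu$-nullsets.

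The non-ergodic case \eqref{nonergodic} I would reduce to the ergodic one by the ergodic decomposition $\mu=\int\mu_\omega\,d\mu(\omega)$, which is available because the compact metrizable embedding makes $(\Omega,\mathcal{A})$ standard Borel. The ordinal partitions do not depend on the reference measure, Kolmogorov--Sinai entropy is affine under the ergodic decomposition, and the measurability hypothesis on $\sigma((\mathbf{X}\circ T^{\circ t})_t)$ descends to $\mu$-almost every component; applying the ergodic case to each $\mu_\omega$ and integrating in $\omega$ then yields \eqref{equ:hKS_lim_perm}. The main obstacles I expect are (a) handling atoms of $F$ carefully in the Birkhoff step so that the limiting ordinal $\sigma$-algebra really captures $\sigma(X_i\circ T^{\circ t})$ modulo $\mu$-nullsets, and (b) verifying that the sigma-algebra hypothesis indeed descends to almost every ergodic component of $\mu$ in the non-ergodic case.
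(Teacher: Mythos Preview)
Your proposal is correct and follows essentially the same route as the paper: monotonicity of the ordinal partitions, Birkhoff's theorem to recover $F_i\circ X_i$ from empirical ranks together with the lemma $\sigma(F_i\circ X_i)\overset{\mu}{=}\sigma(X_i)$ (which cleanly absorbs your obstacle~(a) without separate treatment of atoms), Walters' generating-sequence theorem, and the ergodic decomposition for case~\eqref{nonergodic}. Your obstacle~(b) is the one point the paper passes over quickly; it goes through because $\mathcal{A}$ is countably generated under~\eqref{nonergodic}.
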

When Bandt and Pompe \cite{bandt_pompe_2002} invented the permutation entropy, they considered
one-dimensional systems with coincidence of states and measurements. This fits into the given general
approach as follows: $\Omega$ is a Borel subset of ${\mathbb R}$ and only one observable is considered
to be the \emph{identity map} $\mathrm{id}$ from $\Omega$ into ${\mathbb R}$.
In this situation the assumptions of Theorem \ref{main} are satisfied and so it holds
\begin{equation*}
h_\mu^{\mathrm{KS}}(T)
=
\lim\limits_{d\to\infty} h_\mu(T,\mathcal{P}_d^{\mathrm{id},T})=\sup_{d\in {\mathbb N}} h_\mu(T,\mathcal{P}_d^{\mathrm{id},T})
\end{equation*}
(compare \cite{keller_sinn_2010, keller_sinn_2009}).\vspace{3mm}

\textit{Structure of the paper.} The paper is organized as follows.
In Section \ref{sec2} we provide a proof of Theorem \ref{main} on the basis of Antoniouk et al.~\cite{antoniouk_et_al_2013}.
We, moreover, discuss this statement from different perspectives in Section \ref{sec3} by presenting its modifications and variants. 
Section \ref{sec4} is devoted to the concept of permutation entropy, in particular to the two different approaches to it given by Bandt et al.~in \cite{bandt_et_al_2002}
and Amig{\'o} et al.~in \cite{amigo_2005}, respectively, and to its relation to the Kolmogorov-Sinai entropy.
The ordinal approach to dynamical systems opens new perspectives to the estimation of system complexity. Advantages and limitations of 
this approach are discussed in Section \ref{sec5}. The natural question of how many observables are necessary for satisfying the assumptions 
of Theorem \ref{main} is in the focus of Section \ref{sec6}. The corresponding discussion is
strongly 
related to Takens' 
delay embedding and similar ideas (see Takens \cite{takens_81} and Sauer \cite{sauer_et_al_1991}).

\section{Kolmogorov-Sinai entropy from the ordinal viewpoint}\label{sec2}
This section is devoted to the proof of Theorem \ref{main}.\vspace{3mm}

\textit{Preliminaries.}
In the following we write $\mathcal{F} \overset{\mu}{=} \mathcal{G} $ if
$\mathcal{F}\overset{\mu}{\supset}\mathcal{G}$ and $\mathcal{F}\overset{\mu}{\subset}\mathcal{G}$,
and denote by $\mathbf{1}_A$ the \emph{indicator function} of a subset $A \subset \Omega$.
Moreover $\sigma (\diamondsuit)$ denotes the $\sigma$-algebra generated by a set $\diamondsuit$ of subsets of $\Omega$,
by a sequence or double sequence $\diamondsuit$ of sets of subsets of $\Omega$, or by a random variable $\diamondsuit$ on $\Omega$.\vspace{3mm}

Given two finite
partitions $\mathcal{C},\mathcal{D}\subset \mathcal{A}$ of $\Omega$, we write $\mathcal{C}\prec\mathcal{D}$
if $\mathcal{D}$ is \emph{finer} than $\mathcal{C}$ or, equivalently, if $\mathcal{C}$ is \emph{coarser}
than $\mathcal{D}$, that is, each element $C \in \mathcal{C}$ is a finite union of some elements of $\mathcal{D}$.
Note that $\prec$ on the set of finite partitions of $\Omega$ contained in $\mathcal{A}$ is a partial order.

The \emph{join} $\bigvee_{r=1}^m \mathcal{C}_r$ of $m \in \mathbb{N}$ finite partitions
$\mathcal{C}_r = \{C_r^{(1)},C_r^{(2)},\ldots,C_r^{(\vert \mathcal{C}_r \vert)}\} \subset \mathcal{A}$ of $\Omega$ with $r=1,2,\ldots, m$
is the coarsest partition refining all
$\mathcal{C}_r$; $r=1,2,\ldots, m$, i.e.
\begin{equation*}
\bigvee_{r=1}^m \mathcal{C}_r
=
\{\bigcap_{r=1}^m C_r^{(l_r)}\neq\emptyset \mid l_r\in\{1,2,\ldots,\vert \mathcal{C}_r\vert\}\mbox{ for }r=1,2,\ldots ,m\}.
\end{equation*}
For an observable $Y$ on $(\Omega,\mathcal{A},\mu,T)$ we consider the finite partitions
\begin{equation*}
\mathcal{P}_d^{Y,T}:=\bigvee_{0\leq s<t\leq d}\mathcal{P}_{s,t}^{Y,T}\mbox{ and }\widetilde{\mathcal{P}}_d^{Y,T}:=\bigvee_{0<t\leq d}\mathcal{P}_{0,t}^{Y,T}
\end{equation*}
(compare \eqref{bisection}) for $d\in {\mathbb N}$ and the $\sigma$-algebras $\Sigma^{Y,T}$ and $\widetilde{\Sigma}^{Y,T}$ generated from all $\mathcal{P}_d^{Y,T}$
and $\widetilde{\mathcal{P}}_d^{Y,T}$; $d\in {\mathbb N}$, respectively.

Besides $\mathcal{P}_d^{(X_i)_{i=1}^n,T}=\bigvee_{i=1}^n \mathcal{P}_d^{X_i,T}$ (compare \eqref{ppart}), for $d,n\in {\mathbb N}$ we are interested in the finite partitions
\begin{equation}\label{less}
\widetilde{\mathcal{P}}_d^{(X_i)_{i=1}^n,T}
:=
\bigvee_{i = 1}^n\widetilde{\mathcal{P}}_d^{X_i,T}.
\end{equation}
Furthermore, we need the following $\sigma$-algebras associated to these partitions:

\begin{equation*}
\Sigma^{\mathbf{X},T}
:=
\sigma\left(\left(\mathcal{P}_d^{(X_i)_{i=1}^n,T}\right)_{d,n\in\mathbb{N}}\right)
=
\sigma\left(\left(\Sigma^{X_i,T}\right)_{i\in\mathbb{N}}\right)
\end{equation*}
and
\begin{equation*}
\widetilde{\Sigma}^{\mathbf{X},T}
:=
\sigma\left( \left(\widetilde{\mathcal{P}}_d^{(X_i)_{i=1}^n,T} \right)_{d,n \in \mathbb{N} } \right)
=
\sigma\left(\left(\widetilde{\Sigma}^{X_i,T}\right)_{i\in\mathbb{N}}\right).
\end{equation*}\vspace{3mm}

\textit{The proof.} Although we consider dynamical systems equipped with infinitely many observables,
we can follow closely the argumentation in the paper Antoniouk et al.~\cite{antoniouk_et_al_2013}. So let us first recall or modify
those statements of that paper used in our proof.

\begin{lemma}\cite[Lemma 3.2]{antoniouk_et_al_2013}\label{distribution}
Let $F:\mathbb{R}\to[0,1]$ be the distribution function of an observable
$X$, that is $F(a) = \mu(\{\omega\in \Omega \mid X(\omega)\leq a\})$ for all $a \in \mathbb{R}$.
Then
\begin{equation*}
\sigma(F \circ X)
\overset{\mu}{=}
\sigma(X).
\end{equation*}
\end{lemma}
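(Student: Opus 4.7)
\textit{Proof plan.} The easy inclusion is $\sigma(F\circ X)\subset\sigma(X)$: since $F\colon\mathbb{R}\to[0,1]$ is a distribution function, it is non-decreasing and right-continuous, hence Borel measurable, so $F\circ X$ is $\sigma(X)$-measurable. This gives $\sigma(F\circ X)\overset{\mu}{\subset}\sigma(X)$ at once (no null-set adjustment even needed).

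For the reverse inclusion $\sigma(X)\overset{\mu}{\subset}\sigma(F\circ X)$, the strategy is to recover $X$ from $F\circ X$ up to a $\mu$-null set by means of the generalized inverse (quantile function)
\begin{equation*}
F^{-1}\colon(0,1]\to\mathbb{R},\qquad F^{-1}(u):=\inf\{a\in\mathbb{R}\mid F(a)\geq u\}.
\end{equation*}
Being non-decreasing and left-continuous, $F^{-1}$ is Borel measurable. I claim that
\begin{equation*}
F^{-1}(F(X(\omega)))=X(\omega)\qquad\text{for $\mu$-a.e. }\omega\in\Omega.
\end{equation*}
Granting the claim, the map $F^{-1}\circ F\circ X$ is $\sigma(F\circ X)$-measurable and coincides $\mu$-a.s. with $X$, so for every Borel set $C\subset\mathbb{R}$,
\begin{equation*}
X^{-1}(C)\;\overset{\mu}{=}\;(F\circ X)^{-1}\bigl((F^{-1})^{-1}(C)\bigr)\in\sigma(F\circ X),
\end{equation*}
which yields $\sigma(X)\overset{\mu}{\subset}\sigma(F\circ X)$ and finishes the proof.

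The content of the argument, and the only delicate point, is the a.s. identity $F^{-1}\circ F\circ X=X$. Let
\begin{equation*}
N:=\{a\in\mathbb{R}\mid F^{-1}(F(a))\neq a\}.
\end{equation*}
If $a\in N$, then $F^{-1}(F(a))<a$, so there exists $b<a$ with $F(b)=F(a)$; in particular $F$ is constant on $[b,a]$. Consequently $N$ is a (countable) union of maximal half-open intervals $(c_k,d_k]$ on each of which $F$ is constant. For every such interval, the very definition of $F$ gives
\begin{equation*}
\mu\bigl(X^{-1}((c_k,d_k])\bigr)=F(d_k)-F(c_k)=0,
\end{equation*}
so summing yields $\mu(X^{-1}(N))=0$, which is exactly the claim. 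The main (mild) obstacle is thus a careful bookkeeping of the atoms and flat pieces of $F$; once one uses the quantile function and the observation that $X$ almost surely avoids the interiors of the flat intervals of $F$, everything falls into place.
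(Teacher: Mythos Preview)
The paper does not supply its own proof of this lemma; it merely cites \cite[Lemma 3.2]{antoniouk_et_al_2013}. So there is nothing to compare against on the level of strategy, and your argument via the quantile function $F^{-1}$ is the standard one and is essentially correct.

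Two small points of bookkeeping deserve tightening. First, you define $F^{-1}$ only on $(0,1]$, but $F\circ X$ can take the value $0$; you should note that $\{\omega:F(X(\omega))=0\}\subset\{\omega:X(\omega)\leq a_0\}$ with $a_0=\sup\{a:F(a)=0\}$, and this set has $\mu$-measure $F(a_0-)=0$ (or argue similarly if $a_0=-\infty$), so the domain issue is harmless modulo a null set. Second, your description of $N$ as a countable union of half-open intervals $(c_k,d_k]$ is not quite accurate: if $F$ has a jump at the right endpoint $d_k$ of a flat stretch, the corresponding piece of $N$ is the \emph{open} interval $(c_k,d_k)$. This does not affect the conclusion, since in that case $\mu\bigl(X^{-1}((c_k,d_k))\bigr)=F(d_k-)-F(c_k)=0$ as well; but the case split (flat stretch ending with or without a jump, or extending to $\pm\infty$) should be made explicit. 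With these adjustments your proof is complete.
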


\begin{lemma}\cite[Lemma 3.3]{antoniouk_et_al_2013}\label{convergence}
Let $T: \Omega \hookleftarrow$ be an ergodic map and let
$I_d:\Omega\to\mathbb{R}$ be defined by
$I_d(\omega) := \sum_{t= 1}^{d} \mathbf{1}_{\{X ( T^{\circ t}(\omega)) \leq X(\omega)\}}$
for all $d \in \mathbb{N}$ and $\omega \in \Omega$.
Then
\begin{equation*}
F(X(\omega))
=
\lim\limits_{d\to\infty} \frac{I_d(\omega)}{d} \text{ for a.e.~} \omega \in \Omega.
\end{equation*}
\end{lemma}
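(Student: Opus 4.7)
The plan is to apply Birkhoff's pointwise ergodic theorem in a way that avoids the apparent circularity of the indicator $\mathbf{1}_{\{X(T^{\circ t}(\omega))\leq X(\omega)\}}$, whose threshold itself depends on $\omega$. For each fixed $a\in\mathbb{R}$ I would set $\phi_a(\omega) := \mathbf{1}_{\{X(\omega)\leq a\}}$; this is a bounded measurable function with $\int\phi_a\,d\mu = F(a)$. Ergodicity of $T$ together with Birkhoff's theorem yields a $\mu$-null set $N_a$ such that
\begin{equation*}
\frac{1}{d}\sum_{t=1}^{d}\phi_a(T^{\circ t}(\omega))\longrightarrow F(a) \quad \text{as } d\to\infty
\end{equation*}
for every $\omega\in\Omega\setminus N_a$. (Starting the sum at $t=1$ rather than $t=0$ changes it by one bounded term, absorbed by the factor $1/d$.)

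Next I would choose a countable set $D\subset\mathbb{R}$ which is dense in $\mathbb{R}$ and contains every point of discontinuity of $F$; this is possible because the discontinuity set of a monotone function is at most countable. Setting $N := \bigcup_{a\in D}N_a$, the set $N$ is still $\mu$-null, and for each $\omega\in\Omega\setminus N$ the above convergence holds simultaneously for every $a\in D$.

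Now fix $\omega\notin N$ and write $x := X(\omega)$. If $x\in D$, then applying the convergence directly with $a = x$ gives $I_d(\omega)/d\to F(x) = F(X(\omega))$, since $\phi_x(T^{\circ t}(\omega))$ coincides with $\mathbf{1}_{\{X(T^{\circ t}(\omega))\leq x\}}$. Otherwise $x\notin D$, in which case $F$ is continuous at $x$. I would then pick $b_k, a_k\in D$ with $b_k<x<a_k$, $b_k\uparrow x$, and $a_k\downarrow x$. The pointwise monotonicity of the indicators together with the simultaneous Birkhoff convergence for $a_k$ and $b_k$ yields
\begin{equation*}
F(b_k) \leq \liminf_{d\to\infty}\frac{I_d(\omega)}{d} \leq \limsup_{d\to\infty}\frac{I_d(\omega)}{d} \leq F(a_k),
\end{equation*}
and letting $k\to\infty$ uses continuity of $F$ at $x$ to squeeze both the liminf and the limsup to $F(x) = F(X(\omega))$.

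The only genuine obstacle is precisely the $\omega$-dependence of the threshold inside the indicator. The countable-dense-set device converts this into countably many standard Birkhoff applications, while the insistence on including all jump points of $F$ in $D$ handles any atoms of the distribution of $X$ (at such a point $x$ one has $F(x^{-}) < F(x)$, so the sandwich argument alone would not pin down the limit and the direct Birkhoff application with $a=x$ is required).
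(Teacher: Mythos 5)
Your argument is correct and complete: applying Birkhoff's theorem to the countable family of level-set indicators $\mathbf{1}_{\{X\leq a\}}$, $a\in D$, and then squeezing via monotonicity, with the jump points of $F$ deliberately adjoined to $D$ so that atoms of the distribution of $X$ are handled by the direct case rather than the sandwich, is exactly the right way to defuse the $\omega$-dependent threshold. The paper itself does not prove this lemma but imports it verbatim from Antoniouk et al.~\cite[Lemma 3.3]{antoniouk_et_al_2013}, and your proof is essentially the standard argument given there, so there is nothing to add.
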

By very slight modifications we can extend \cite[Corollary 3.4 and Corollary 3.5]{antoniouk_et_al_2013}
to countably many observables:
\begin{corollary}\label{inclusions}
Let $T: \Omega \hookleftarrow$ be an ergodic map.
Then
\begin{equation*}
\sigma(\mathbf{X})
\overset{\mu}{\subset}
\widetilde{\Sigma}^{\mathbf{X},T}
\subset\Sigma^{\mathbf{X},T}.
\end{equation*}
\end{corollary}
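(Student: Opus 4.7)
The plan is to extract the content of \cite[Corollary~3.4 and 3.5]{antoniouk_et_al_2013} for a single observable and promote it to the countable setting by using the identities displayed just before the statement, namely $\widetilde{\Sigma}^{\mathbf{X},T} = \sigma((\widetilde{\Sigma}^{X_i,T})_{i \in \mathbb{N}})$ and $\Sigma^{\mathbf{X},T} = \sigma((\Sigma^{X_i,T})_{i \in \mathbb{N}})$.

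I would handle the right inclusion $\widetilde{\Sigma}^{\mathbf{X},T} \subset \Sigma^{\mathbf{X},T}$ first, since it is purely combinatorial. For any observable $Y$ and $d \in \mathbb{N}$, the partition $\widetilde{\mathcal{P}}_d^{Y,T}$ is the join of the bisections $\mathcal{P}_{0,t}^{Y,T}$ over $0 < t \leq d$, i.e.\ a sub-join of the family defining $\mathcal{P}_d^{Y,T}$; hence $\widetilde{\mathcal{P}}_d^{Y,T} \prec \mathcal{P}_d^{Y,T}$ and $\widetilde{\Sigma}^{Y,T} \subset \Sigma^{Y,T}$. Specialising to $Y = X_i$ and forming the generated $\sigma$-algebras over $i \in \mathbb{N}$ yields the desired inclusion.

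For the left inclusion $\sigma(\mathbf{X}) \overset{\mu}{\subset} \widetilde{\Sigma}^{\mathbf{X},T}$, it is enough, by countable generation, to prove $\sigma(X_i) \overset{\mu}{\subset} \widetilde{\Sigma}^{X_i,T}$ for each fixed $i \in \mathbb{N}$. Let $F_i$ be the distribution function of $X_i$. Lemma~\ref{distribution} gives $\sigma(X_i) \overset{\mu}{=} \sigma(F_i \circ X_i)$, so it suffices to show that $F_i \circ X_i$ is $\mu$-a.s.\ measurable with respect to $\widetilde{\Sigma}^{X_i,T}$. By Lemma~\ref{convergence} (where the ergodicity hypothesis enters),
\begin{equation*}
F_i(X_i(\omega)) = \lim_{d \to \infty} \frac{1}{d} \sum_{t=1}^{d} \mathbf{1}_{\{X_i \circ T^{\circ t} \leq X_i\}}(\omega) \quad \text{for $\mu$-a.e.~} \omega.
\end{equation*}
Each indicator $\mathbf{1}_{\{X_i \circ T^{\circ t} \leq X_i\}}$ is measurable with respect to the bisection $\mathcal{P}_{0,t}^{X_i,T}$, hence with respect to $\widetilde{\mathcal{P}}_d^{X_i,T}$ for every $d \geq t$, and so with respect to $\widetilde{\Sigma}^{X_i,T}$. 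As an a.e.\ pointwise limit of such functions, $F_i \circ X_i$ is measurable modulo $\mu$ with respect to $\widetilde{\Sigma}^{X_i,T}$, whence $\sigma(X_i) \overset{\mu}{\subset} \widetilde{\Sigma}^{X_i,T} \subset \widetilde{\Sigma}^{\mathbf{X},T}$. To pass to $\sigma(\mathbf{X}) = \sigma(\bigcup_{i} \sigma(X_i))$ one invokes the standard fact that the $\mu$-completion of $\widetilde{\Sigma}^{\mathbf{X},T}$ is itself a $\sigma$-algebra, and countable unions of $\mu$-null sets remain null; since each generator $\sigma(X_i)$ sits inside this completion, so does the generated $\sigma(\mathbf{X})$.

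The main obstacle is conceptual rather than technical: one must carefully track whether set-theoretic inclusions or only $\mu$-inclusions are available at each stage. The identification $\sigma(X_i) \overset{\mu}{=} \sigma(F_i \circ X_i)$ from Lemma~\ref{distribution} and the a.e.\ convergence in Lemma~\ref{convergence} are both genuinely modulo $\mu$, which is why the left inclusion cannot be upgraded beyond $\overset{\mu}{\subset}$. Beyond that, the extension from one to countably many observables is just bookkeeping -- the new definitions in \eqref{less} were designed precisely so that the single-observable argument glues without friction.
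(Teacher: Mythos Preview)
Your proof is correct and follows essentially the same route as the paper: reduce to a single observable, use Lemma~\ref{distribution} and Lemma~\ref{convergence} to show $\sigma(X_i)\overset{\mu}{\subset}\widetilde{\Sigma}^{X_i,T}$ via the $\widetilde{\Sigma}^{X_i,T}$-measurability of $I_d/d$ and hence of $F_i\circ X_i$, then aggregate over $i$; the right inclusion is noted as immediate from the construction. Your write-up is in fact a bit more explicit about the passage from the single-observable inclusion to the countable family and about where exactly the $\mu$-modifications enter, but the argument is the same.
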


\begin{proof}
Compare to \cite[Corollary 3.4]{antoniouk_et_al_2013}. The $\sigma$-algebra $\widetilde{\Sigma}^{\mathbf{X},T}$
is generated by the $\sigma$-algebras
$\widetilde{\Sigma}^{X_i,T}
:=
\sigma((\bigvee_{0<t\leq d}\mathcal{P}_t^{X_i,T})_{d \in \mathbb{N}})$; $i \in \mathbb{N}$.
Therefore by $\sigma(X_i) \overset{\mu}{\subset} \widetilde{\Sigma}^{X_i,T}$ for all $i \in \mathbb{N}$
it follows the assumption. This is true since
$\frac{I_d}{d}:\Omega\to[0,1]$ is $\widetilde{\Sigma}^{X,T}$-$\mathcal{B}([0,1])$-measurable
for all $d \in \mathbb{N}$ and hence so is $F \circ X$ and $X$ by Lemma \ref{distribution}
and Lemma \ref{convergence}. The inclusion $\widetilde{\Sigma}^{\mathbf{X},T}
\subset\Sigma^{\mathbf{X},T}$ is given by construction (compare \eqref{ppart} and \eqref{less}).
\end{proof}

\begin{corollary} \label{inclusion}
Let $T: \Omega \hookleftarrow$ be an ergodic map. Then
\begin{equation*}
\sigma((\mathbf{X} \circ T^{\circ t})_{t \in \mathbb{N}_0})
\overset{\mu}{\subset}
\Sigma^{\mathbf{X},T}.
\end{equation*}
\end{corollary}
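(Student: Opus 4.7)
The plan is to reduce the statement to Corollary \ref{inclusions} by exploiting the fact that $\Sigma^{\mathbf{X},T}$ is closed under preimage by the dynamics. The key observation is that the generating family of bisections $\mathcal{P}_{s,r}^{X_i,T}$ has a structure built from iterates of $T$, so time-shifts merely permute the indices among the generators.

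First I would verify that $(T^{\circ t})^{-1}(\Sigma^{\mathbf{X},T}) \subset \Sigma^{\mathbf{X},T}$ for every $t \in \mathbb{N}_0$. It suffices to check this on the generating bisections: for $0 \leq s < r$ and $i \in \mathbb{N}$, the preimage under $T^{\circ t}$ of $\{\omega \in \Omega \mid X_i(T^{\circ s}\omega) < X_i(T^{\circ r}\omega)\}$ equals $\{\omega \in \Omega \mid X_i(T^{\circ(s+t)}\omega) < X_i(T^{\circ(r+t)}\omega)\} = \mathcal{P}_{s+t,r+t}^{X_i,T}$ (the ``$\geq$'' half being analogous), which again belongs to $\Sigma^{\mathbf{X},T}$. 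Since preimage commutes with countable unions, intersections, and complements, $T^{-t}$-stability propagates from the generators to the whole $\sigma$-algebra.

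Next I would combine this with Corollary \ref{inclusions}, which gives $\sigma(\mathbf{X}) \overset{\mu}{\subset} \Sigma^{\mathbf{X},T}$. For any $A \in \sigma(\mathbf{X})$ pick $B \in \Sigma^{\mathbf{X},T}$ with $\mu(A\,\triangle\,B)=0$; then $(T^{\circ t})^{-1}(A)\,\triangle\,(T^{\circ t})^{-1}(B) = (T^{\circ t})^{-1}(A\,\triangle\,B)$, whose $\mu$-measure vanishes by $\mu$-invariance of $T$. Combined with the stability just established, $(T^{\circ t})^{-1}(B) \in \Sigma^{\mathbf{X},T}$, and hence $(T^{\circ t})^{-1}(A) \overset{\mu}{\subset} \Sigma^{\mathbf{X},T}$. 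Since $\sigma(\mathbf{X} \circ T^{\circ t}) = (T^{\circ t})^{-1}(\sigma(\mathbf{X}))$, this yields $\sigma(\mathbf{X} \circ T^{\circ t}) \overset{\mu}{\subset} \Sigma^{\mathbf{X},T}$ for every $t \in \mathbb{N}_0$.

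Finally, I would conclude by invoking the standard fact that $\{B \in \mathcal{A} \mid \exists\, B' \in \Sigma^{\mathbf{X},T}\colon \mu(B\,\triangle\,B')=0\}$ is itself a $\sigma$-algebra (the $\mu$-completion of $\Sigma^{\mathbf{X},T}$ inside $\mathcal{A}$), so having $\sigma(\mathbf{X}\circ T^{\circ t})$ inside it for each $t$ forces the $\sigma$-algebra $\sigma((\mathbf{X}\circ T^{\circ t})_{t\in\mathbb{N}_0})$ they jointly generate to sit inside it as well, giving the claim. The only real point of care, which I view as the main obstacle, is the transport of the $\mu$-equivalence along $T^{\circ t}$; this is precisely where $\mu$-preservation of $T$ is indispensable, and it is also the reason we can leverage the ergodic hypothesis inherited from Corollary \ref{inclusions} without further assumptions.
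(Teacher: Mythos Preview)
Your argument is correct and rests on the same core observation as the paper: the generating bisections of $\Sigma^{\mathbf{X},T}$ are stable under the shift, which the paper phrases as $\mathcal{P}_d^{X_i\circ T,T}\prec\mathcal{P}_{d+1}^{X_i,T}$ (hence $\Sigma^{X_i\circ T^{\circ t},T}\subset\Sigma^{X_i,T}$) and you phrase equivalently as $(T^{\circ t})^{-1}\Sigma^{\mathbf{X},T}\subset\Sigma^{\mathbf{X},T}$. The only organizational difference is that the paper invokes Corollary~\ref{inclusions} separately for each shifted family $\mathbf{X}\circ T^{\circ t}$, whereas you invoke it once for $\mathbf{X}$ and then transport the $\mu$-inclusion along $T^{\circ t}$ using measure preservation; both routes are valid and yield the same conclusion.
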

\begin{proof}
For fixed $n\in {\mathbb N}$, in \cite[Proof of Corollary 3.5]{antoniouk_et_al_2013} it is shown that
\begin{equation}\label{puneq}
\mathcal{P}_d^{X_i \circ T,T} \prec \mathcal{P}_{d+1}^{X_i,T}\mbox{ for all }d \in \mathbb{N}\mbox{ and }i=1,2,\ldots,n
\end{equation}
implying
\begin{equation}\label{sigmauneq}
\Sigma^{X_i\circ T^{\circ t},T}\subset \Sigma^{X_i,T}\mbox{ for all }i=1,2,\ldots,n\mbox{ and }t\in\mathbb{N}_0.
\end{equation}
Moreover, Corollary \ref{inclusions} gives
\begin{equation}\label{suneq}
\sigma(\mathbf{X}\circ T^{\circ t}) \overset{\mu}{\subset} \Sigma^{\mathbf{X}\circ T^{\circ t},T}
\mbox{ for all }t \in \mathbb{N}_0.
\end{equation}
Consequently,
$\sigma((\mathbf{X} \circ T^{\circ t})_{t \in \mathbb{N}_0}) \overset{\mu}{\subset} \Sigma^{\mathbf{X},T}$.
\end{proof}

\begin{lemma}\label{increasing}
$(\mathcal{P}_d^{(X_i)_{i=1}^n,T})_{d,n \in \mathbb{N}}$ is an increasing sequence in $n$ for fixed $d$,
and for fixed $n$ it is an increasing sequence in $d$.

In particular,
$(\mathcal{P}_{d_j}^{(X_i)_{i=1}^{n_j},T})_{d_j,n_j \in \mathbb{N}}$ is an increasing sequence in $j$ if
$(d_j)_{j \in \mathbb{N}}$ and $(n_j)_{j \in \mathbb{N}}$ are increasing sequences in $\mathbb{N}$.
\end{lemma}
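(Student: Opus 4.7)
The plan is to read off both monotonicity statements directly from the definition of $\mathcal{P}_d^{(X_i)_{i=1}^n,T}$ as an iterated join, using the elementary fact that enlarging the index set of a join can only make the resulting partition finer. Concretely, if $\mathcal{I}\subset \mathcal{J}$ is an inclusion of finite index sets and $(\mathcal{C}_r)_{r\in\mathcal{J}}$ is a family of finite partitions in $\mathcal{A}$, then every element of $\bigvee_{r\in\mathcal{J}}\mathcal{C}_r$ is of the form $\bigcap_{r\in\mathcal{J}} C_r^{(l_r)}\subset \bigcap_{r\in\mathcal{I}} C_r^{(l_r)}$, so it is contained in a unique element of $\bigvee_{r\in\mathcal{I}}\mathcal{C}_r$; hence $\bigvee_{r\in\mathcal{I}}\mathcal{C}_r\prec \bigvee_{r\in\mathcal{J}}\mathcal{C}_r$. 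I would state this once as a one-line observation.

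With that tool in hand, the two claims become instances. For fixed $d$ and increasing $n$, I take the index set $\{(i,s,t)\mid 1\le i\le n,\ 0\le s<t\le d\}$ and enlarge it to $\{(i,s,t)\mid 1\le i\le n+1,\ 0\le s<t\le d\}$; the join observation immediately yields $\mathcal{P}_d^{(X_i)_{i=1}^n,T}\prec \mathcal{P}_d^{(X_i)_{i=1}^{n+1},T}$. For fixed $n$ and increasing $d$, I use the same principle with the index set $\{(i,s,t)\mid 1\le i\le n,\ 0\le s<t\le d\}\subset \{(i,s,t)\mid 1\le i\le n,\ 0\le s<t\le d+1\}$, which gives $\mathcal{P}_d^{(X_i)_{i=1}^n,T}\prec \mathcal{P}_{d+1}^{(X_i)_{i=1}^n,T}$.

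For the ``in particular'' statement, I would invoke transitivity of $\prec$: given increasing sequences $(d_j)$ and $(n_j)$, the chain
\[
\mathcal{P}_{d_j}^{(X_i)_{i=1}^{n_j},T}\prec \mathcal{P}_{d_{j+1}}^{(X_i)_{i=1}^{n_j},T}\prec \mathcal{P}_{d_{j+1}}^{(X_i)_{i=1}^{n_{j+1}},T}
\]
combines the two monotonicity statements already proved, yielding $\mathcal{P}_{d_j}^{(X_i)_{i=1}^{n_j},T}\prec \mathcal{P}_{d_{j+1}}^{(X_i)_{i=1}^{n_{j+1}},T}$.

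There is essentially no obstacle here; the lemma is a bookkeeping statement about joins of partitions, and the only thing to be careful about is making the two index-set inclusions explicit so that the reader sees that ``adding observables'' and ``extending the time horizon'' both amount to taking a join over a larger index set. I would keep the proof to a few lines and not repeat the definition of $\prec$ or of the join.
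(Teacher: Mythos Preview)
Your proposal is correct and follows essentially the same approach as the paper: both proofs simply write out the defining joins and observe that enlarging the index set of a join (either by adding an observable or by extending the time horizon) yields a finer partition, with the ``in particular'' part following by transitivity of $\prec$. The only difference is cosmetic---you make the general principle about joins over nested index sets explicit, whereas the paper just displays the three relevant joins and declares the refinement.
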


\begin{proof}
Given $d,n \in \mathbb{N}$, it holds
\begin{equation*}
\begin{split}
\mathcal{P}_d^{(X_i)_{i=1}^n,T}&=\bigvee_{i=1}^n\ \bigvee_{0\leq s<t\leq d}\mathcal{P}_{s,t}^{X_i,T},\\
\mathcal{P}_{d+1}^{(X_i)_{i=1}^n,T}&=\bigvee_{i=1}^n\ \bigvee_{0\leq s<t\leq d+1}\mathcal{P}_{s,t}^{X_i,T},\\
\mathcal{P}_d^{(X_i)_{i=1}^{n+1},T}&=\bigvee_{i=1}^{n+1}\ \bigvee_{0\leq s<t\leq d}\mathcal{P}_{s,t}^{X_i,T},
\end{split}
\end{equation*}
implying $\mathcal{P}_d^{(X_i)_{i=1}^n,T}\prec \mathcal{P}_{d+1}^{(X_i)_{i=1}^n,T},\mathcal{P}_d^{(X_i)_{i=1}^{n+1},T}$ and
so the above statements.
\end{proof}
For completing the proof of Theorem \ref{main},
we apply the following statement (see Walters \cite[Theorem 4.22]{walters_2000}):

\begin{lemma}\label{walters}
For a sequence
$(\mathcal{C}_d)_{d \in \mathbb{N}}$ of finite partitions
$\mathcal{C}_d\in \mathcal{A}$ of $\Omega$ increasing
with respect to $\prec$ and satisfying $\sigma(( \mathcal{C}_d)_{d \in \mathbb{N}})\overset{\mu}{\supset}\mathcal{A}$,
it holds
\begin{equation*}
h_{\mu}^{\mathrm{KS}}(T)
=\lim\limits_{d\to\infty} h_{\mu}(T,\mathcal{C}_d).
\end{equation*}
\end{lemma}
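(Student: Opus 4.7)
The plan is to proceed along the classical two-sided argument. One direction, $\lim_{d\to\infty} h_\mu(T,\mathcal{C}_d) \leq h_\mu^{\mathrm{KS}}(T)$, is immediate once the limit is known to exist: monotonicity of $h_\mu(T,\cdot)$ with respect to the refinement order $\prec$, combined with the assumption $\mathcal{C}_d \prec \mathcal{C}_{d+1}$, shows that $(h_\mu(T,\mathcal{C}_d))_{d\in\mathbb{N}}$ is non-decreasing; each term is bounded above by $h_\mu^{\mathrm{KS}}(T)$ by definition, so the limit exists and lies below the supremum.

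For the reverse inequality I would fix an arbitrary finite partition $\mathcal{P} \subset \mathcal{A}$ of $\Omega$ and aim at $h_\mu(T,\mathcal{P}) \leq \lim_{d\to\infty} h_\mu(T,\mathcal{C}_d)$; taking the supremum over such $\mathcal{P}$ then yields the full statement. The standard tool here is the information-theoretic bound
\begin{equation*}
h_\mu(T,\mathcal{P}) \;\leq\; h_\mu(T,\mathcal{C}_d) + H_\mu(\mathcal{P} \mid \mathcal{C}_d),
\end{equation*}
where $H_\mu(\mathcal{P} \mid \mathcal{C}_d)$ denotes the conditional Shannon entropy; this is proved by writing $H_\mu(\bigvee_{i=0}^{t-1} T^{-i}(\mathcal{P} \vee \mathcal{C}_d))$ in two ways, using $T$-invariance of $\mu$ and subadditivity of conditional entropy, and dividing by $t$ before letting $t \to \infty$. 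Granted this bound, it remains to show that $H_\mu(\mathcal{P} \mid \mathcal{C}_d) \to 0$ as $d \to \infty$.

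This last convergence is where I expect the real work to lie, and it is supplied by the increasing-martingale convergence theorem. The sub-$\sigma$-algebras $\sigma(\mathcal{C}_d)$ form an increasing sequence whose generated $\sigma$-algebra contains $\mathcal{A}$ modulo $\mu$-null sets, by the hypothesis $\sigma((\mathcal{C}_d)_{d\in\mathbb{N}}) \overset{\mu}{\supset} \mathcal{A}$; in particular it contains each element $P \in \mathcal{P}$. Martingale convergence therefore gives $\mu(P \mid \sigma(\mathcal{C}_d)) \to \mathbf{1}_P$ $\mu$-almost everywhere, and substitution into
\begin{equation*}
H_\mu(\mathcal{P} \mid \mathcal{C}_d) \;=\; -\sum_{P \in \mathcal{P}} \int_{\Omega} \mu(P \mid \sigma(\mathcal{C}_d)) \,\ln \mu(P \mid \sigma(\mathcal{C}_d))\, d\mu,
\end{equation*}
together with dominated convergence (the integrand is uniformly bounded by $e^{-1}$), delivers $H_\mu(\mathcal{P} \mid \mathcal{C}_d) \to 0$.

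The main obstacle is bookkeeping rather than depth. One must verify that the relation $\overset{\mu}{\supset}$ passes correctly from $\sigma((\mathcal{C}_d)_{d\in\mathbb{N}})$ to the ascending union $\bigcup_d \sigma(\mathcal{C}_d)$ so that martingale convergence is applicable, and one must be careful to invoke the two information-theoretic inequalities for $h_\mu(T,\cdot)$ and $H_\mu(\cdot \mid \cdot)$ in their correct $T$-invariant form. Once these two ingredients are in place, combining the upper and lower bound yields $h_\mu(T,\mathcal{P}) \leq \lim_{d\to\infty} h_\mu(T,\mathcal{C}_d)$ for every finite $\mathcal{P} \subset \mathcal{A}$, hence the claimed identity.
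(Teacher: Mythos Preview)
The paper does not supply a proof of this lemma at all; it simply quotes the statement with the attribution ``see Walters \cite[Theorem 4.22]{walters_2000}'' and proceeds to use it. Your argument is correct and is essentially the classical textbook proof: monotonicity for one inequality, and the bound $h_\mu(T,\mathcal{P}) \leq h_\mu(T,\mathcal{C}_d) + H_\mu(\mathcal{P}\mid\mathcal{C}_d)$ together with $H_\mu(\mathcal{P}\mid\mathcal{C}_d)\to 0$ for the other. The only minor remark is that Walters' own presentation derives the vanishing of the conditional entropy via an elementary approximation argument (each $P\in\mathcal{P}$ is approximated in measure by sets from the generating algebra) rather than invoking martingale convergence explicitly, but the two routes are equivalent in this setting and yours is perfectly valid.
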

First suppose that $T$ is an ergodic map. Then under the assumptions of Theorem \ref{main} and 
by Corollary \ref{inclusion} it holds
$\mathcal{A} \overset{\mu}{\subset} \sigma((\mathbf{X} \circ T^{\circ t})_{t \in \mathbb{N}_0}) \overset{\mu}{\subset} \Sigma^{\mathbf{X},T}$. Since
by Lemma \ref{increasing}
$(\mathcal{P}_{d_j}^{(X_i)_{i=1}^{n_j},T})_{d_j,n_j \in \mathbb{N}}$
is an increasing sequence in $j$ with respect to $\prec$ for increasing sequences
$(d_j)_{j \in \mathbb{N}}$ and $(n_j)_{j \in \mathbb{N}}$ in $\mathbb{N}$,
the assertion of Theorem \ref{main} follows from Lemma \ref{walters}.

In the non-ergodic case the ergodic decomposition
theorem is consulted. For a thorough treatment we refer the reader to
Einsiedler and Ward \cite{einsiedler_2010} and Einsiedler et al.~\cite{einsiedler_et_al_2015}.
In particular, the ergodic decomposition theorem claims that under certain conditions any $T$-invariant measure $\mu$
can be decomposed into ergodic components and subsequently the entropy rate as well as the Kolmogorov-Sinai
entropy of $T$ with respect to $\mu$ can be written as the integral of the entropies
with respect to the decomposition.

In order to complete the proof of Theorem \ref{main},
we apply the following statement (see Einsiedler and Ward \cite[Theorem 6.2]{einsiedler_2010},
Einsiedler et al.~\cite[Theorem 5.27]{einsiedler_et_al_2015}
and Keller and Sinn \cite{keller_sinn_2010} for the case of a non-invertible $T$):

\begin{theorem}\label{ergodicdecomposition}
Let $(\Omega,\mathcal{A},\mu,T)$ be a measure-preserving dynamical system satisfying \eqref{nonergodic}. Then there
exists a probability space $(\Omega^\ast,\mathcal{A}^\ast,\nu)$ and a map
$\omega^\ast \mapsto \mu_{\omega^\ast}$ associating to each $\omega^\ast \in \Omega^\ast$
a probability measure $\mu_{\omega^\ast}$ on $(\Omega,\mathcal{A})$
such that the following is valid:

$\Omega^\ast$ can be embedded into some compact metrizable space so that $\mathcal{A}=\mathcal{B}(\Omega^\ast)$, the map
$\omega^\ast \in \Omega^\ast \to \int_{\Omega^\ast} f \,\mathbf{d} \mu_{\omega^\ast}$
is $\mathcal{A^\ast}$-$\mathcal{B}(\mathbb{R})$-measurable for every essentially bounded measurable function $f:\Omega \to \mathbb{R}$,
the measure $\mu_{\omega^\ast}$ is ergodic 
$T$-invariant for $\nu$-a.e.~$\omega^\ast \in \Omega^\ast$,
and
\begin{equation*}
\mu
=
\int_{\Omega^*} \mu_{\omega^\ast} \,\mathbf{d}\:\!\nu(\omega^\ast).
\end{equation*}
Moreover, it holds
\begin{equation}
h^{\mathrm{KS}}_\mu(T)
=
\int_{\Omega^*} h^{\mathrm{KS}}_{\mu_{\omega^\ast}}(T)\,\mathbf{d}\:\!\nu(\omega^\ast)
\label{decomposition}
\end{equation}
and
\begin{equation}
h_\mu(T,\mathcal{P})
=
\int_{\Omega^*} h_{\mu_{\omega^\ast}}(T,\mathcal{P})\,\mathbf{d}\:\!\nu(\omega^\ast)
\text{ for each finite partition } \mathcal{P} \subset\mathcal{A} \text{ of } \Omega.
\label{entropy}
\end{equation}
\end{theorem}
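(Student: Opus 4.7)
The plan proceeds in three stages: constructing the ergodic decomposition of $\mu$, verifying the invariance and ergodicity of the components, and establishing the two entropy identities. By assumption \eqref{nonergodic} we may treat $(\Omega,\mathcal{A},\mu)$ as a standard Borel probability space. I would let $\mathcal{I}\subset\mathcal{A}$ denote the sub-$\sigma$-algebra of $\mu$-essentially $T$-invariant sets and apply Rohlin's disintegration theorem with respect to $\mathcal{I}$. This produces a measurable family $\omega\mapsto\mu_\omega$ of Borel probability measures satisfying $\mu=\int\mu_\omega\,\mathbf{d}\mu(\omega)$, with each $\mu_\omega$ concentrated on the $\mathcal{I}$-atom through $\omega$. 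Taking $(\Omega^*,\mathcal{A}^*,\nu)$ to be a Polish realization of $(\Omega,\mathcal{I},\mu|_\mathcal{I})$ supplies the triple required by the statement, and the measurability of $\omega^*\mapsto\int f\,\mathbf{d}\mu_{\omega^*}$ is part of the disintegration.

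The component properties follow from standard arguments. Invariance $T_*\mu_{\omega^*}=\mu_{\omega^*}$ is verified by testing against a countable family $\{f_k\}$ generating the bounded measurable functions: for each $k$ the identity $\int f_k\circ T\,\mathbf{d}\mu_{\omega^*}=\int f_k\,\mathbf{d}\mu_{\omega^*}$ holds on a $\nu$-conull set, and a countable intersection gives a single conull set working for all $k$. Ergodicity then comes from Birkhoff's theorem: for any $A\in\mathcal{A}$ the Birkhoff averages converge $\mu$-a.e.~to $\mathbb{E}_\mu(\mathbf{1}_A\mid\mathcal{I})$, which by construction is $\mu_{\omega^*}$-a.s.~constant, so every $T$-invariant set has $\mu_{\omega^*}$-measure $0$ or $1$.

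For the entropy identities I would rely on the classical fact that $\mu\mapsto h_\mu(T,\mathcal{P})$ is affine on the convex set of $T$-invariant probability measures for each fixed finite partition $\mathcal{P}$; combined with the disintegration and the bound $h_\mu(T,\mathcal{P})\leq\ln|\mathcal{P}|$, dominated convergence delivers \eqref{entropy}. To obtain \eqref{decomposition} I would fix a countable algebra generating $\mathcal{A}$ and the induced monotone refining sequence of finite partitions $(\mathcal{Q}_k)$, so that Lemma \ref{walters} applied to $\mu$ and, for $\nu$-a.e.~$\omega^*$, to $\mu_{\omega^*}$ yields $h_\mu(T,\mathcal{Q}_k)\uparrow h_\mu^{\mathrm{KS}}(T)$ and $h_{\mu_{\omega^*}}(T,\mathcal{Q}_k)\uparrow h_{\mu_{\omega^*}}^{\mathrm{KS}}(T)$; integrating \eqref{entropy} with $\mathcal{P}=\mathcal{Q}_k$ and passing to the monotone limit then gives \eqref{decomposition}. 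The step I expect to be most delicate is the joint measurability of $\omega^*\mapsto h_{\mu_{\omega^*}}(T,\mathcal{Q}_k)$ and of its supremum $h_{\mu_{\omega^*}}^{\mathrm{KS}}(T)$, since the usual definition of Kolmogorov-Sinai entropy as a supremum over uncountably many partitions is not obviously measurable; reducing to the countable refining sequence via Lemma \ref{walters} is precisely what circumvents this obstacle.
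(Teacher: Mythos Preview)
The paper does not prove Theorem~\ref{ergodicdecomposition} at all: it is stated as a black box with citations to Einsiedler and Ward \cite[Theorem~6.2]{einsiedler_2010}, Einsiedler et al.~\cite[Theorem~5.27]{einsiedler_et_al_2015}, and Keller and Sinn \cite{keller_sinn_2010}, and then immediately applied to finish the proof of Theorem~\ref{main}. So there is no in-paper argument to compare your proposal against; what you have written is essentially the standard proof one finds in those references (disintegration over the invariant $\sigma$-algebra, verification of invariance and ergodicity of the fibre measures via a countable separating family, affineness of $h_\mu(T,\mathcal{P})$, and passage to the supremum along a generating refining sequence).

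Your outline is correct in its broad strokes. Two places deserve more care if you intend this as an actual proof rather than a sketch. First, the ergodicity step: concluding that every $T$-invariant set has $\mu_{\omega^*}$-measure $0$ or $1$ from the constancy of Birkhoff limits requires again reducing to a countable generating algebra, since a priori the exceptional $\nu$-null set depends on $A$. Second, the claim that $\mu\mapsto h_\mu(T,\mathcal{P})$ is affine is true but is itself a nontrivial fact (it follows from concavity of $H_\mu$ together with the opposite bound coming from $H_\mu(\mathcal{P}_t)\le H_{\mu_{\omega^*}}(\mathcal{P}_t)+H(\nu\text{-partition})$-type estimates, or more cleanly from the conditional-entropy formulation); you should either cite it or spell out the two inequalities. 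The measurability issue you flag is exactly the right worry, and your resolution via Lemma~\ref{walters} and a fixed countable refining sequence is the standard and correct one.
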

Altogether we obtain
\begin{eqnarray*}
h^{\mathrm{KS}}_{\mu}(T)
&\overset{\text{\eqref{decomposition}}}{=}&
\int_{\Omega^*} h^{\mathrm{KS}}_{\mu_{\omega^\ast}}(T)\,\mathbf{d}\:\!\nu(\omega^\ast)\\
&\overset{\text{Theorem \ref{main}}}{\underset{\text{ergodic case}}{=}}&
\int_{\Omega^*} \lim\limits_{j \to \infty}h_{\mu_{\omega^\ast}}(T,\mathcal{P}_{d_j}^{(X_i)_{i=1}^{n_j},T})\,\mathbf{d}\:\!\nu(\omega^\ast)\\
&\overset{\text{monotone }}{\underset{\text{convergence}}{=}}&
\lim\limits_{j \to \infty} \int_{\Omega^*} h_{\mu_{\omega^\ast}}(T,\mathcal{P}_{d_j}^{(X_i)_{i=1}^{n_j},T})\,\mathbf{d}\:\!\nu(\omega^\ast)\\
&\overset{\text{\eqref{entropy}}}{=}&
\lim\limits_{j \to \infty} h_{\mu}(T,\mathcal{P}_{d_j}^{(X_i)_{i=1}^{n_j},T}).
\end{eqnarray*}
Here $(n_j)_{j\in {\mathbb N}}$ and  $(d_j)_{j\in {\mathbb N}}$ are strictly increasing sequences of natural numbers.

\section{Modifications and conseqences of Theorem \ref{main}.}\label{sec3}
We want to have a closer look at Theorem \ref{main}. For this
recall that $\mathbf{X} \circ T^{\circ t}$ can be interpreted as a measurement of a system at time $t$. As discussed in Section \ref{sec1},
there is no information loss when taking a pure ordinal viewpoint in the case that these measurements have `separating properties'.\vspace{3mm}

\textit{Less comparisons.}
The main Theorem \ref{main} can be given in a relaxed version if the considered observables provide
a `separation' from the outset (compare also \cite{keller_sinn_2009, keller_et_al_2007}).
In order to determine the Kolmogorov-Sinai entropy,
this means, in the case of `separating' original observables, one does not
need all comparisons between the elements of an orbit but only comparisons between points and their iterates.
\begin{theorem}\label{main2}
Let $(\Omega, \mathcal{A},\mu, T$) be a measure-preserving dynamical system and
$\mathbf{X} = (X_i)_{i \in \mathbb{N}}$ be a sequence of observables such
that $\sigma(\mathbf{X}) \overset{\mu}{\supset} \mathcal{A}$. Assume that \eqref{ergodic} or \eqref{nonergodic} holds.
Then
\begin{equation*}
h_\mu^{\mathrm{KS}}(T)
=
\lim\limits_{d,n\to\infty} h_\mu(T,\widetilde{\mathcal{P}}_d^{(X_i)_{i=1}^n,T})
=
\sup_{d,n\in {\mathbb N}} h_\mu(T,\widetilde{\mathcal{P}}_d^{(X_i)_{i=1}^n,T}).
\end{equation*}
\end{theorem}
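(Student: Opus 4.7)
The plan is to mirror the proof of Theorem~\ref{main}, exploiting the stronger hypothesis $\sigma(\mathbf{X})\overset{\mu}{\supset}\mathcal{A}$ to bypass Corollary~\ref{inclusion} (the iterate-comparisons machinery), and using the tilded partitions $\widetilde{\mathcal{P}}_d^{(X_i)_{i=1}^n,T}$ throughout in place of $\mathcal{P}_d^{(X_i)_{i=1}^n,T}$. Intuitively, because $\mathbf{X}$ alone already resolves $\mathcal{A}$ (modulo $\mu$-null sets), one does not need to also compare pairs of future iterates $T^{\circ s},T^{\circ t}$ with $s\geq 1$; comparing the base point with its iterates already suffices.

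\textbf{Ergodic case.} Assume \eqref{ergodic}. By Corollary~\ref{inclusions} applied to the ergodic $T$, $\sigma(\mathbf{X})\overset{\mu}{\subset}\widetilde{\Sigma}^{\mathbf{X},T}$. Combined with the hypothesis $\sigma(\mathbf{X})\overset{\mu}{\supset}\mathcal{A}$, this gives $\mathcal{A}\overset{\mu}{\subset}\widetilde{\Sigma}^{\mathbf{X},T}$. Next I would verify the analogue of Lemma~\ref{increasing} for the tilded partitions: from
\begin{equation*}
\widetilde{\mathcal{P}}_d^{(X_i)_{i=1}^n,T}=\bigvee_{i=1}^n\bigvee_{0<t\leq d}\mathcal{P}_{0,t}^{X_i,T},
\end{equation*}
one reads off directly that enlarging $n$ or $d$ only adds joinands, so $\widetilde{\mathcal{P}}_d^{(X_i)_{i=1}^n,T}\prec \widetilde{\mathcal{P}}_{d+1}^{(X_i)_{i=1}^n,T}$ and $\widetilde{\mathcal{P}}_d^{(X_i)_{i=1}^n,T}\prec \widetilde{\mathcal{P}}_d^{(X_i)_{i=1}^{n+1},T}$. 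For any strictly increasing sequences $(d_j),(n_j)$ in $\mathbb{N}$, the sequence $\bigl(\widetilde{\mathcal{P}}_{d_j}^{(X_i)_{i=1}^{n_j},T}\bigr)_{j\in\mathbb{N}}$ is therefore $\prec$-increasing, and its generated $\sigma$-algebra is precisely $\widetilde{\Sigma}^{\mathbf{X},T}\overset{\mu}{\supset}\mathcal{A}$. Lemma~\ref{walters} then yields
\begin{equation*}
h_\mu^{\mathrm{KS}}(T)=\lim_{j\to\infty}h_\mu\bigl(T,\widetilde{\mathcal{P}}_{d_j}^{(X_i)_{i=1}^{n_j},T}\bigr),
\end{equation*}
and monotonicity in $d,n$ upgrades this to the double limit and supremum asserted in the theorem.

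\textbf{Non-ergodic case.} Under \eqref{nonergodic} I would apply the ergodic decomposition Theorem~\ref{ergodicdecomposition}, writing $\mu=\int_{\Omega^\ast}\mu_{\omega^\ast}\,\mathbf{d}\nu(\omega^\ast)$. For $\nu$-a.e.\ $\omega^\ast$ the measure $\mu_{\omega^\ast}$ is ergodic, so the ergodic case just established applies and gives
\begin{equation*}
h_{\mu_{\omega^\ast}}^{\mathrm{KS}}(T)=\lim_{j\to\infty}h_{\mu_{\omega^\ast}}\bigl(T,\widetilde{\mathcal{P}}_{d_j}^{(X_i)_{i=1}^{n_j},T}\bigr).
\end{equation*}
Integrating against $\nu$, then swapping limit and integral by monotone convergence (using the $\prec$-monotonicity from the previous step and the elementary fact that $h_{\mu_{\omega^\ast}}(T,\cdot)$ is monotone in the partition), and finally invoking \eqref{decomposition} and \eqref{entropy}, reproduces the computation at the end of Section~\ref{sec2} verbatim, yielding the claim.

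\textbf{The main obstacle.} The ergodic-case argument is essentially a reshuffling of ingredients already in hand; the delicate point is the descent of the global assumption $\sigma(\mathbf{X})\overset{\mu}{\supset}\mathcal{A}$ to the ergodic fibres $\mu_{\omega^\ast}$, since a priori the $\mu$-null exceptional sets depend on $A\in\mathcal{A}$. This is resolved by noting that under \eqref{nonergodic} the $\sigma$-algebra $\mathcal{A}$ is countably generated, so one may pick a countable generating family, apply the $\mu$-a.s.\ identification to each member, and thus secure a single $\nu$-null set outside of which the hypothesis transfers to every $\mu_{\omega^\ast}$ simultaneously. Once this technicality is dispatched, the non-ergodic case reduces to the ergodic one as above.
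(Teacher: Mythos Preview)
Your proof is correct and follows essentially the same route as the paper: Corollary~\ref{inclusions} plus the hypothesis plus monotonicity of $(\widetilde{\mathcal{P}}_d^{(X_i)_{i=1}^n,T})_{d,n}$ feeds into Lemma~\ref{walters} for the ergodic case, and Theorem~\ref{ergodicdecomposition} handles the non-ergodic reduction. Your final paragraph on descending the hypothesis $\sigma(\mathbf{X})\overset{\mu}{\supset}\mathcal{A}$ to the ergodic fibres via countable generation of $\mathcal{A}$ is a valid refinement of a point the paper (both here and in the proof of Theorem~\ref{main}) leaves implicit.
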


For an ergodic map $T$ we have that
$ \mathcal{A} \overset{\mu}{\subset} \widetilde{\Sigma}^{\mathbf{X},T}$,
which follows from Corollary \ref{inclusions}
and the assumption $\sigma(\mathbf{X}) \overset{\mu}{\supset} \mathcal{A}$.
Moreover $(\widetilde{\mathcal{P}}_d^{(X_i)_{i=1}^n,T})_{d,n \in \mathbb{N}}$ is an increasing sequence
in $d$ and $N$ with respect to $\prec$, as it can be shown analogical to the proof of Lemma
\ref{increasing}. Thus, for $T$ ergodic
the assertion follows by Lemma \ref{walters}.
To show the non-ergodic case one can use the
the ergodic decomposition theorem as in the proof of
Theorem~\ref{main}. 

It seems that the assumption $\sigma(\mathbf{X}) \overset{\mu}{\supset} \mathcal{A}$ in Theorem
\ref{main2} cannot be replaced by the assumption $\sigma((\mathbf{X} \circ T^{\circ t})_{t\in\mathbb{N}_0}) \overset{\mu}{\supset} \mathcal{A}$
in Theorem \ref{main}. At least, the argumentation of the proof of Corollary~\ref{inclusion} cannot be adapted.
Whereas
\begin{equation*}
\sigma(\mathbf{X}\circ T^{\circ t}) \overset{\mu}{\subset} \widetilde{\Sigma}^{\mathbf{X}\circ T^{\circ t},T}
\mbox{ for all }t \in \mathbb{N}_0
\end{equation*}
is true as \eqref{suneq} is, the analogue
\begin{equation*}
\widetilde{\mathcal{P}}_d^{X_i \circ T,T}\prec\widetilde{\mathcal{P}}_{d+1}^{X_i,T}\mbox{ for all }d \in \mathbb{N}\mbox{ and }i=1,2,\ldots,n
\end{equation*}
of \eqref{puneq} is false. Therefore the analogue
\begin{equation*}
\widetilde{\Sigma}^{X_i\circ T^{\circ t},T} \subset \widetilde{\Sigma}^{X_i,T}\mbox{ for all }i=1,2,\ldots,n\mbox{ and }t\in\mathbb{N}_0
\end{equation*}
of \eqref{sigmauneq}
is not guaranteed.
Let us give an example.

\begin{example}\label{example}
Let $\Omega=[0,1]$ and $T:\Omega\hookleftarrow$ be defined by
\begin{equation*}
T(\omega)
=
\left\{
\begin{array}{rl}
2\omega &\mbox{for }\omega\leq\frac{1}{2}\\
2-2\omega &\mbox{else}
\end{array}
\right. .
\end{equation*}
($T$ is the tent map preserving the equidistribution on $[0,1]$.)
Let
\begin{equation*}
Y=2\cdot {\bf 1}_{[0,\,1/3]}+3\cdot {\bf 1}_{]1/3,\,2/3]}+{\bf 1}_{]2/3,\,1]},
\end{equation*}
$\omega_1 = 1$ and $\omega_2 = \frac{5}{6}$. Then
\begin{equation*}
\begin{split}
(Y(T^{\circ t}(\omega_1))_{t\in {\mathbb N}_0}&=(1,2,2,2,2,2,\ldots),\\
(Y(T^{\circ t}(\omega_2))_{t\in {\mathbb N}_0}&=(1,2,3,3,3,3,\ldots).
\end{split}
\end{equation*}
It follows that $\omega_1$ and $\omega_2$ are separated by $\mathcal{P}^{Y \circ T,T}_{0,1}$ and
hence for all $\widetilde{\mathcal{P}}^{Y \circ T, T}_d$; $d \in \mathbb{N}$, but are not separated by
$\widetilde{\mathcal{P}}^{Y,T}_d$ for all $d \in \mathbb{N}$. Consequently,
$\widetilde{\mathcal{P}}^{Y \circ T, T}_d\hspace{-2mm}\not{\!\!\prec}\ \widetilde{\mathcal{P}}^{Y,T}_{d+l}$
for all $d \in \mathbb{N}$ and $l \in \mathbb{N}_0$.
\end{example}\vspace{3mm}

\textit{Other partitions.}
For a single observable $X$ on a measure-preserving dynamical system $(\Omega,\mathcal{A},\mu,T)$
and $s,t\in \mathbb{N}_0$ with $s<t$, let
\begin{equation*}
\begin{split}
\mathcal{Q}^{X,T}_{s,t}
=
\{&\{\omega \in \Omega \mid X ( T^{\circ s}(\omega)) > X ( T^{\circ t}(\omega))\} ,\\
&\{\omega \in \Omega \mid X ( T^{\circ s}(\omega)) \leq X ( T^{\circ t}(\omega))\}\}
\end{split}
\end{equation*}
and
\begin{equation*}
\begin{split}
\mathcal{R}^{X,T}_{s,t}
=
\{&\{\omega \in \Omega \mid X ( T^{\circ s}(\omega)) < X ( T^{\circ t}(\omega))\} ,\\
&\{\omega \in \Omega \mid X ( T^{\circ s}(\omega)) > X ( T^{\circ t}(\omega))\},\\
&\{\omega \in \Omega \mid X ( T^{\circ s}(\omega)) = X ( T^{\circ t}(\omega))\}\}.
\end{split}
\end{equation*}
Further, for observables $X_1,X_2,\ldots ,X_n$ on $(\Omega,\mathcal{A},\mu,T)$ and $d\in {\mathbb N}$, let
\begin{equation} \label{qpart}
\mathcal{Q}_d^{(X_i)_{i=1}^n,T}
=
\bigvee_{i=1}^n\ \bigvee_{0\leq s<t\leq d}\mathcal{Q}_{s,t}^{X_i,T}
\end{equation}
and
\begin{equation} \label{rpart}
\mathcal{R}_d^{(X_i)_{i=1}^n,T}
=
\bigvee_{i=1}^n\ \bigvee_{0\leq s<t\leq d}\mathcal{R}_{s,t}^{X_i,T}.
\end{equation}
(If one of the sets of the right hand side of \eqref{qpart} or \eqref{qpart} is empty, then it is not considered in order to have only nonempty sets.)
Then the following is valid:
\begin{corollary}
The statement of Theorem \ref{main} remains true when substituting\linebreak $\mathcal{P}_d^{(X_i)_{i=1}^n,T}$
by $\mathcal{Q}_d^{(X_i)_{i=1}^n,T}$ or $\mathcal{R}_d^{(X_i)_{i=1}^n,T}$.
\end{corollary}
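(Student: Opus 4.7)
The plan is to reduce both cases to Theorem \ref{main} itself, rather than rerunning the full machinery of Section \ref{sec2}, by exploiting two simple observations: a $\pm$ symmetry that converts $\mathcal{Q}$-partitions into $\mathcal{P}$-partitions, and a refinement identity that wedges $\mathcal{R}$-partitions between $\mathcal{P}$-partitions and $h^{\mathrm{KS}}_\mu(T)$.

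For $\mathcal{Q}$ I would start from the pointwise identity $\mathcal{P}_{s,t}^{-X,T}=\mathcal{Q}_{s,t}^{X,T}$, which holds because swapping $X$ for $-X$ reverses every strict inequality and turns ``$\geq$'' into ``$\leq$''. Taking joins over $i$ and $s<t$ gives $\mathcal{Q}_d^{(X_i)_{i=1}^n,T}=\mathcal{P}_d^{(-X_i)_{i=1}^n,T}$. Since $x\mapsto -x$ is a Borel isomorphism of $\mathbb{R}$, we have $\sigma(-X_i\circ T^{\circ t})=\sigma(X_i\circ T^{\circ t})$ for every $i$ and $t$, and hence the separating hypothesis $\sigma((\mathbf{X}\circ T^{\circ t})_{t\in\mathbb{N}_0})\overset{\mu}{\supset}\mathcal{A}$ is inherited verbatim by $(-X_i)_{i\in\mathbb{N}}$. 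Applying Theorem \ref{main} to this sequence then delivers \eqref{equ:hKS_lim_perm} with $\mathcal{Q}_d^{(X_i)_{i=1}^n,T}$ in place of $\mathcal{P}_d^{(X_i)_{i=1}^n,T}$.

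For $\mathcal{R}$ I would use that the three-cell partition $\mathcal{R}_{s,t}^{X,T}$ is precisely the common refinement $\mathcal{P}_{s,t}^{X,T}\vee\mathcal{Q}_{s,t}^{X,T}$, because its cells $\{<\},\{>\},\{=\}$ are exactly the nonempty intersections of cells of the two bisections. Taking joins yields
\[
\mathcal{R}_d^{(X_i)_{i=1}^n,T}=\mathcal{P}_d^{(X_i)_{i=1}^n,T}\vee\mathcal{Q}_d^{(X_i)_{i=1}^n,T},
\]
so in particular $\mathcal{P}_d^{(X_i)_{i=1}^n,T}\prec\mathcal{R}_d^{(X_i)_{i=1}^n,T}$. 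Monotonicity of the entropy rate under refinement, combined with $h_\mu(T,\mathcal{R}_d^{(X_i)_{i=1}^n,T})\leq h^{\mathrm{KS}}_\mu(T)$, yields the squeeze
\[
h_\mu(T,\mathcal{P}_d^{(X_i)_{i=1}^n,T})\leq h_\mu(T,\mathcal{R}_d^{(X_i)_{i=1}^n,T})\leq h^{\mathrm{KS}}_\mu(T).
\]
Theorem \ref{main} forces the outer terms to coincide in the limit, closing the limit case. Monotonicity of $\mathcal{R}_d^{(X_i)_{i=1}^n,T}$ in $d$ and $n$ with respect to $\prec$ is inherited from the corresponding property of the two joined families supplied by Lemma \ref{increasing}, so the limit also equals the supremum.

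I do not anticipate a serious obstacle. The only point requiring a brief remark is the standing convention that empty cells are omitted from the listed partitions: negation merely relabels nonempty cells, and common refinements by construction inherit the convention because empty intersections are discarded. Neither operation affects $h_\mu(T,\cdot)$ or the refinement relations used above, so the identifications $\mathcal{Q}_d^{(X_i)_{i=1}^n,T}=\mathcal{P}_d^{(-X_i)_{i=1}^n,T}$ and $\mathcal{R}_d^{(X_i)_{i=1}^n,T}=\mathcal{P}_d^{(X_i)_{i=1}^n,T}\vee\mathcal{Q}_d^{(X_i)_{i=1}^n,T}$ go through without modification.
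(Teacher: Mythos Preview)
Your proposal is correct and follows essentially the same approach as the paper: apply Theorem~\ref{main} to $(-X_i)_{i\in\mathbb{N}}$ for the $\mathcal{Q}$-case, and use the refinement $\mathcal{P}_d^{(X_i)_{i=1}^n,T}\prec\mathcal{R}_d^{(X_i)_{i=1}^n,T}$ together with a squeeze against $h_\mu^{\mathrm{KS}}(T)$ for the $\mathcal{R}$-case. Your version is in fact slightly more explicit than the paper's, in that you spell out the join identity $\mathcal{R}_{s,t}^{X,T}=\mathcal{P}_{s,t}^{X,T}\vee\mathcal{Q}_{s,t}^{X,T}$ and verify that the separating hypothesis transfers to $(-X_i)_{i\in\mathbb{N}}$.
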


\begin{proof}
Application of Theorem \ref{main} to $-{\bf X}=(-X_i)_{i\in {\mathbb N}}$ provides
\begin{equation*}
h_\mu^{\mathrm{KS}}(T)
=
\lim\limits_{d,n\to\infty} h_\mu(T,\mathcal{P}_d^{(-X_i)_{i=1}^n,T})=h_\mu(T,\mathcal{Q}_d^{(X_i)_{i=1}^n,T}).
\end{equation*}
Moreover, each $\mathcal{R}_d^{(X_i)_{i=1}^n,T}$ is finer than $\mathcal{P}_d^{(X_i)_{i=1}^n,T}$
implying
\begin{equation*}
h_\mu(T,\mathcal{R}_d^{(X_i)_{i=1}^n,T})\geq h_\mu(T,\mathcal{P}_d^{(X_i)_{i=1}^n,T}).
\end{equation*}
Therefore
\begin{equation*}
h_\mu^{\mathrm{KS}}(T)
\geq
\lim\limits_{d,n\to\infty} h_\mu(T,\mathcal{R}_d^{(X_i)_{i=1}^n,T})
\geq
\lim\limits_{d,n\to\infty} h_\mu(T,\mathcal{P}_d^{(X_i)_{i=1}^n,T})
\overset{\text{Theorem \ref{main}}}{=} h_\mu^{\mathrm{KS}}(T).
\end{equation*} 
The existence of the limit
\begin{equation*}
\lim\limits_{d,n\to\infty} h_\mu(T,\mathcal{R}_d^{(X_i)_{i=1}^n,T})
\end{equation*}
and its coincidence with the corresponding supremum is obvious (compare discussion
for $\mathcal{P}_d^{(X_i)_{i=1}^n,T}$ in Section \ref{sec2}).
\end{proof}
Let us consider an order $\prec$ between observables $X,Y$ by $X\prec Y$ iff for all
$\omega_1,\omega_2\in\Omega$ the following holds (compare \cite{amigo_2012}):
\begin{equation*}
Y(\omega_1)\leq Y(\omega_2)\mbox{ implies }X(\omega_1)\leq X(\omega_2).
\end{equation*}
One easily shows the following:
\begin{lemma}
For $X\prec Y$ it holds $\mathcal{R}_d^{X,T}\prec \mathcal{R}_d^{Y,T}$.
\end{lemma}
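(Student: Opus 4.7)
The plan is a two-step reduction followed by a short case analysis. First, I would use that the join $\bigvee$ of finite partitions is monotone with respect to the refinement order $\prec$: if $\mathcal{C}_\alpha \prec \mathcal{D}_\alpha$ for every $\alpha$ in a finite family, then any common refinement of the $\mathcal{D}_\alpha$ also refines each $\mathcal{C}_\alpha$, hence $\bigvee_\alpha \mathcal{C}_\alpha \prec \bigvee_\alpha \mathcal{D}_\alpha$. Applied to the defining formula \eqref{rpart} with $n=1$, this reduces the claim to the pairwise statement $\mathcal{R}_{s,t}^{X,T} \prec \mathcal{R}_{s,t}^{Y,T}$ for each fixed $0 \leq s < t \leq d$.

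Fixing such $s,t$, I would set $\omega_1 := T^{\circ s}(\omega)$ and $\omega_2 := T^{\circ t}(\omega)$ and translate the hypothesis $X \prec Y$, applied both to the pair $(\omega_1,\omega_2)$ and to the swapped pair $(\omega_2,\omega_1)$, into inclusions between the cells of the two trisections. Labelling the three $Y$-cells $A_<, A_=, A_>$ and the three $X$-cells $B_<, B_=, B_>$ in the obvious way, the defining implication of $X \prec Y$ together with its contrapositive gives $A_= \subseteq B_=$, $B_< \subseteq A_<$, and $B_> \subseteq A_>$. Mutual disjointness of the trisection cells then forces every cell of the $Y$-trisection to lie inside a single cell of the $X$-trisection, which is exactly the refinement $\mathcal{R}_{s,t}^{X,T} \prec \mathcal{R}_{s,t}^{Y,T}$.

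The step I expect to need the most care is the equality case. The implication $Y(\omega_1) = Y(\omega_2) \Rightarrow X(\omega_1) = X(\omega_2)$, on which $A_= \subseteq B_=$ rests, only falls out by applying the defining implication of $X \prec Y$ in both directions at once; without this, the $Y$-equality cell could in principle split across several cells of the $X$-trisection and the pairwise refinement would fail. Once the equality clause is in hand, the strict-inequality inclusions follow by contraposition, and the pairwise refinement—and hence, by the first paragraph, the full statement—closes routinely.
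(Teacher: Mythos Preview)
Your reduction via monotonicity of the join to the pairwise claim $\mathcal{R}_{s,t}^{X,T}\prec\mathcal{R}_{s,t}^{Y,T}$ is fine, and the three inclusions $A_{=}\subseteq B_{=}$, $B_{<}\subseteq A_{<}$, $B_{>}\subseteq A_{>}$ are all correct consequences of $X\prec Y$. The gap is the next sentence: from these inclusions together with disjointness you obtain $A_{<}\cap B_{>}=\emptyset$ and $A_{>}\cap B_{<}=\emptyset$, but \emph{not} $A_{<}\cap B_{=}=\emptyset$ or $A_{>}\cap B_{=}=\emptyset$. Indeed, if $Y(T^{\circ s}(\omega))<Y(T^{\circ t}(\omega))$ the hypothesis only yields $X(T^{\circ s}(\omega))\leq X(T^{\circ t}(\omega))$, and nothing prevents this from being an equality for some $\omega$ and a strict inequality for another. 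So the $Y$-cell $A_{<}$ can genuinely straddle the two $X$-cells $B_{<}$ and $B_{=}$, and the pairwise refinement does not follow.

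This gap cannot be closed, because the assertion is already false for $d=1$. Take $\Omega=\{1,2,3,4\}$, let $T$ swap $1\leftrightarrow 3$ and $2\leftrightarrow 4$, set $Y(1)=Y(2)=0$, $Y(3)=1$, $Y(4)=2$, and $X(1)=X(2)=X(3)=0$, $X(4)=1$. Then $X\prec Y$ (here $X=g\circ Y$ with $g$ nondecreasing), yet $\mathcal{R}_1^{Y,T}=\{\{1,2\},\{3,4\}\}$ while $\mathcal{R}_1^{X,T}=\{\{1,3\},\{2\},\{4\}\}$, and neither partition refines the other. The paper itself offers no argument beyond ``one easily shows the following'', so there is no proof to compare your attempt against; but your instinct that the equality case needed the most care was exactly right---it is precisely the passage from a strict $Y$-inequality to a possible $X$-equality that breaks the argument.
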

Note that for $X\prec Y$ not generally $\mathcal{P}_d^{X,T}\prec \mathcal{P}_d^{Y,T}$ and
$\mathcal{Q}_d^{X,T}\prec \mathcal{Q}_d^{Y,T}$. After the following corollary being an immediate consequence of Theorem \ref{main}, we will
illustrate this point by an example.

\begin{corollary}\label{mainordered}
Let $(\Omega, \mathcal{A},\mu, T)$ be a measure-preserving dynamical system and
$\mathbf{X}=(X_i)_{i\in {\mathbb N}}$ be a sequence of observables with
$X_1\prec X_2\prec X_3\prec\ldots$ and
$\sigma(\{\mathbf{X} \circ T^{\circ t}\}_{t\in {\mathbb N}_0}) \overset{\mu}{\supset} \mathcal{A}$.
Assume that \eqref{ergodic} or \eqref{nonergodic} holds. Then
\begin{equation*}
h_\mu^{\mathrm{KS}}(T)
=
\lim\limits_{d,i\to\infty} h_\mu(T,\mathcal{R}_d^{X_i,T})=\sup_{d,i\in {\mathbb N}} h_\mu(T,\mathcal{R}_d^{X_i,T}).
\end{equation*}
\end{corollary}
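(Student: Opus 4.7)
The plan is to deduce Corollary \ref{mainordered} directly from the preceding corollary (which asserts that Theorem \ref{main} remains true after substituting $\mathcal{R}_d^{(X_i)_{i=1}^n,T}$ for $\mathcal{P}_d^{(X_i)_{i=1}^n,T}$) together with the lemma guaranteeing that $X \prec Y$ implies $\mathcal{R}_d^{X,T} \prec \mathcal{R}_d^{Y,T}$. No new measure-theoretic input is required, and the ergodic/non-ergodic distinction is already absorbed into the invoked results.

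First, I will exploit the assumption $X_1 \prec X_2 \prec X_3 \prec \ldots$ together with the cited lemma to obtain, for every fixed $d,n \in \mathbb{N}$, a $\prec$-chain of partitions
\begin{equation*}
\mathcal{R}_d^{X_1,T} \prec \mathcal{R}_d^{X_2,T} \prec \ldots \prec \mathcal{R}_d^{X_n,T}.
\end{equation*}
Since the join of a finite $\prec$-chain of finite partitions coincides with its finest member, this yields the identification
\begin{equation*}
\mathcal{R}_d^{(X_i)_{i=1}^n,T} \;=\; \bigvee_{i=1}^n \mathcal{R}_d^{X_i,T} \;=\; \mathcal{R}_d^{X_n,T}.
\end{equation*}

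Second, I will apply the preceding corollary to $\mathbf{X}$. Under the hypothesis $\sigma((\mathbf{X} \circ T^{\circ t})_{t \in \mathbb{N}_0}) \overset{\mu}{\supset} \mathcal{A}$ and either \eqref{ergodic} or \eqref{nonergodic}, this delivers
\begin{equation*}
h_\mu^{\mathrm{KS}}(T) \;=\; \lim_{d,n\to\infty} h_\mu(T, \mathcal{R}_d^{(X_i)_{i=1}^n,T}) \;=\; \sup_{d,n\in\mathbb{N}} h_\mu(T, \mathcal{R}_d^{(X_i)_{i=1}^n,T}).
\end{equation*}
Substituting the identification from the previous step and relabelling the index $n$ as $i$ gives precisely the two equalities claimed in Corollary \ref{mainordered}.

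The only point that deserves any care — and the closest thing to an obstacle in an otherwise immediate argument — is the collapse of the join to a single chain element. This rests solely on the order-theoretic observation that a finite $\bigvee$ respects an underlying $\prec$-chain, which is immediate from the definition of the join given in Section \ref{sec2}. Once this is noted, both the existence of the limit and its coincidence with the supremum transfer verbatim from the preceding corollary, so no further argument is needed.
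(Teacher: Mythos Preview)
Your proposal is correct and matches the paper's approach: the paper merely states that Corollary \ref{mainordered} is ``an immediate consequence of Theorem \ref{main}'' without further detail, and what you have written is exactly the natural way to unpack that remark using the preceding lemma ($X\prec Y \Rightarrow \mathcal{R}_d^{X,T}\prec\mathcal{R}_d^{Y,T}$) to collapse the join $\bigvee_{i=1}^n \mathcal{R}_d^{X_i,T}$ to $\mathcal{R}_d^{X_n,T}$.
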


\begin{example}
See Example \ref{example} and let
\begin{equation*}
X=2\cdot {\bf 1}_{[0,\,5/8]}+{\bf 1}_{]5/8,\,1]}
\end{equation*}
and
\begin{equation*}
Y=4\cdot {\bf 1}_{[0,\,1/8]\cup [3/8,5/8]}+3\cdot{\bf 1}_{]1/8,\,3/8[}+{\bf 1}_{]5/8,\,1]}.
\end{equation*} 
Obviously, $X\prec Y$.
Let $\omega_1=\frac{1}{4}$ and $\omega_2=\frac{3}{4}$. Then
\begin{equation*}
\begin{split}
(X(T^{\circ t}(\omega_1))_{t\in {\mathbb N}_0}&=(2,2,1,2,2,2,2,2,2,\ldots),\\
(X(T^{\circ t}(\omega_2))_{t\in {\mathbb N}_0}&=(1,2,1,2,2,2,2,2,2,\ldots),\\
(Y(T^{\circ t}(\omega_1))_{t\in {\mathbb N}_0}&=(3,4,1,4,4,4,4,4,4,\ldots),
\end{split}
\end{equation*}
and
\begin{equation*}
(Y(T^{\circ t}(\omega_2))_{t\in {\mathbb N}_0}=(1,4,1,4,4,4,4,4,4,\ldots).
\end{equation*}
From this, on one hand it follows that $\omega_1$ and $\omega_2$ are separated by $\mathcal{P}^{X,T}_{0,1}$,
i.e.~lie in different elements of $\mathcal{P}^{X,T}_{0,1}$, hence are separated by
$\mathcal{P}^{X,T}_d$ for all $d\in {\mathbb N}$. On the other hand, this implies that $\omega_1$
and $\omega_2$ are not separated by $\mathcal{P}^{Y,T}_d$ for all $d\in {\mathbb N}$.

Therefore for no $d\in {\mathbb N}$ the partition $\mathcal{P}^{Y,T}_d$ is finer than $\mathcal{P}^{X,T}_d$.
The similar is true for $\mathcal{Q}^{Y,T}_d$ and $\mathcal{Q}^{X,T}_d$, since
$\mathcal{Q}^{Z,T}_d=\mathcal{P}^{-Z,T}_d$ for an observable $Z$ on $(\Omega,\mathcal{A},\mu,T)$.
\end{example}

\begin{remark}\label{refinement}
Each finite partition $\mathcal{C}=\{C_1,C_2,\ldots ,C_q\}\subset\mathcal{A}$; $q \in \mathbb{N}$ is generated by observables
of the form $X=\sum_{l=1}^q\alpha_l\cdot {\bf 1}_{C_l}$ in the sense that $C_l=X^{-1}(\alpha_l)$ for all $l=1,2,\ldots, q$, where $\alpha_l$; $l=1,2,\ldots , q$ are different real numbers.
If a partition $\mathcal{D}\subset\mathcal{A}$ is finer than $\mathcal{C}$, than it can be written
as
\begin{equation*}
\mathcal{D}=\bigcup_{l=1}^q\{D_j^{(l)}\mid j=1,2,\ldots, m_l\}
\end{equation*}
with $m_1,m_2,\ldots ,m_q\in {\mathbb N}$ and $C_l=\bigcup_{j=1}^{m_l}D_j^{(l)}$.

If $X=\sum_{l=1}^q\alpha_l\cdot {\bf 1}_{C_l}$ for different $\alpha_l\in {\mathbb N}$ and if $m>m_l$ for
all $l=1,2,\ldots ,q$, then for
\begin{equation*}
Y=\sum_{l=1}^q\sum_{j=1}^{m_l}(\alpha_l\cdot m+j)\,{\bf 1}_{D_j^{(l)}}
\end{equation*}
it holds $X\prec Y$. This shows that an increasing sequence $(\mathcal{C}_d)_{d\in {\mathbb N}}$ can be `generated'
by a sequence $(X_d)_{d\in {\mathbb N}}$ of observables with $X_1\prec X_2\prec X_3\prec\ldots$ .
\end{remark}

\section{Permutation entropy}\label{sec4}
The idea of considering dynamical systems from the ordinal viewpoint is strongly related to the invention of the permutation entropy,
which we want to discuss now. We first give a definition of it in our general framework:
\begin{definition}
Given a sequence $\mathbf{X}=(X_i)_{i \in \mathbb{N}}$ of observables on a measure-preserving dynamical system $(\Omega,\mathcal{A},\mu,T)$,
we define the \emph{permutation entropy} $h_\mu(T,{\bf X})$ with respect to $\mathbf{X}$ by
\begin{equation} \label{PE}
h^\ast_\mu(T,{\bf X})
= \lim_{n\to\infty}\limsup_{d\to\infty} \frac{1}{d}\,H_\mu(T,\mathcal{P}_d^{(X_i)_{i=1}^n,T}).
\end{equation}
\end{definition}
Originally, by Bandt et al.~in \cite{bandt_et_al_2002} the definition of permutation entropy was given directly for one-dimensional systems. In our framework,
this is $h^\ast_\mu(T,\mathrm{id})$ with $T$ being an interval map.\vspace{3mm}

{\textit{Permutation and Kolmogorov-Sinai entropy.}} One reason for investigating the permutation entropy is its close relationship to the well-established Kolmogorov-Sinai entropy
first observed by Bandt et al.~in \cite{bandt_et_al_2002}. In their seminal paper they have shown that both entropies
are coinciding for piecewise monotone interval maps $T$, i.e.~for selfmaps $T$ on intervals splitting into finitely many subintervals on which $T$ is
continuous and monotone.

Moreover, in the case that $\sigma((\mathbf{X} \circ T^{\circ t})_{t\in\mathbb{N}_0}) \overset{\mu}{\supset} \mathcal{A}$ and
that \eqref{ergodic} or \eqref{nonergodic} holds,
the Kolmogorov-Sinai entropy is not larger than permutation entropy.
It holds for finitely many observables
\begin{equation*}
 \lim\limits_{d\to\infty} h_\mu(T,\mathcal{P}_d^{(X_i)_{i=1}^n,T})
 \leq \limsup_{d\to\infty} \frac{1}{d}\,H_\mu(T,\mathcal{P}_d^{(X_i)_{i=1}^n,T})
 \text{ for all } n \in \mathbb{N}
\end{equation*}
(see Keller et al.~\cite[Corollary 3]{keller_et_al_2012}), hence the corresponding inequality for infinitely many ones follows by $n$ approaching to infinity. So let us
summarize:
\begin{corollary}
Let $(\Omega, \mathcal{A},\mu, T$) be a measure-preserving dynamical system
and $\mathbf{X} = (X_i)_{i \in \mathbb{N}}$ be a sequence of observables such that
$\sigma((\mathbf{X} \circ T^{\circ t})_{t\in\mathbb{N}_0}) \overset{\mu}{\supset} \mathcal{A}$.
Assume that \eqref{ergodic} or \eqref{nonergodic} holds.
Then
\begin{equation*}
h_\mu^{\mathrm{KS}}(T)
\leq
h^\ast_\mu(T,{\bf X}).
\end{equation*}
\end{corollary}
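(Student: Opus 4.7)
The plan is to derive the inequality by combining Theorem \ref{main} with the finite-observable bound cited from Keller et al.\ \cite{keller_et_al_2012}, so the proof is essentially an exchange-of-limits argument. Concretely, for each fixed $n\in\mathbb{N}$ the cited inequality reads
\[
\lim_{d\to\infty} h_\mu\bigl(T,\mathcal{P}_d^{(X_i)_{i=1}^n,T}\bigr)
\;\leq\;
\limsup_{d\to\infty}\frac{1}{d}\,H_\mu\bigl(T,\mathcal{P}_d^{(X_i)_{i=1}^n,T}\bigr),
\]
and the right-hand side tends to $h^\ast_\mu(T,\mathbf{X})$ by the very definition of permutation entropy once we let $n\to\infty$. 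So the real content is to recognise the limit of the left-hand side as $h^{\mathrm{KS}}_\mu(T)$.

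For this I would exploit the monotonicity supplied by Lemma \ref{increasing}: since $\mathcal{P}_d^{(X_i)_{i=1}^n,T}$ is $\prec$-increasing in both $d$ and $n$, and the entropy rate functional $\mathcal{C}\mapsto h_\mu(T,\mathcal{C})$ is monotone under refinement, the quantity $h_\mu(T,\mathcal{P}_d^{(X_i)_{i=1}^n,T})$ is non-decreasing in each of $d$ and $n$ separately. Consequently the double limit/supremum in Theorem \ref{main} coincides with the iterated limit, so that $\lim_{n\to\infty}\lim_{d\to\infty} h_\mu(T,\mathcal{P}_d^{(X_i)_{i=1}^n,T}) = h^{\mathrm{KS}}_\mu(T)$. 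Applying $\lim_{n\to\infty}$ to both sides of the displayed per-$n$ inequality then delivers the desired bound.

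I do not expect a genuine obstacle here: the hypothesis $\sigma((\mathbf{X}\circ T^{\circ t})_{t\in\mathbb{N}_0}) \overset{\mu}{\supset}\mathcal{A}$ together with \eqref{ergodic} or \eqref{nonergodic} is used only to justify invoking Theorem \ref{main} on the left-hand side, while the finite-$n$ inequality is general. The one point that requires a sentence of care is the interchange of the limits in $d$ and $n$, but this is immediate from the monotonicity recorded in Lemma \ref{increasing} and is therefore cosmetic rather than substantive.
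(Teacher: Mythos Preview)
Your proposal is correct and follows essentially the same route as the paper: the paper too cites the per-$n$ inequality from \cite[Corollary 3]{keller_et_al_2012} and then lets $n\to\infty$, with Theorem~\ref{main} identifying the left-hand limit as $h_\mu^{\mathrm{KS}}(T)$. Your explicit justification of the interchange of limits via Lemma~\ref{increasing} is a welcome addition of detail to what the paper leaves implicit.
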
\vspace{3mm}

{\textit{ The approach of Amig{\'o} et al.~\cite{amigo_2012,amigo_2005}.}} This approach to permutation entropy different to the original
is based on a refining sequence of finite partitions and is justified by the following statement due to Amig{\'o} et al.~\cite{amigo_2012,amigo_2005}.
We express the statement by finite-valued observables and refer here to Remark \ref{refinement}.
\begin{theorem}
For a measure-preserving dynamical system $(\Omega, \mathcal{A},\mu, T$) the following is valid:
\begin{enumerate}
\item[(i)] If $X$ is a finitely-valued observable, and $\mathcal{P}$ the
finite partition generated by $X$, then
\begin{equation*}
h_\mu(T,\mathcal{P})=h^\ast_\mu(T, X).
\end{equation*}
\item[(ii)] If $(X_i)_{i\in {\mathbb N}}$ is a sequence of finitely-valued observables with $X_1\prec X_2\prec X_3\prec\ldots$ and the corresponding sequence
of finite partitions generates $\mathcal{A}$, then
\begin{equation}\label{modified}
h_{\mu}^{\mathrm{KS}}(T)
=\lim\limits_{i\to\infty} h^\ast_\mu(T,X_i).
\end{equation}
\end{enumerate}
\end{theorem}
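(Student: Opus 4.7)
The plan is to prove (i) directly by comparing the ordinal partition $\mathcal{P}_d^{X,T}$ with the block partition $\mathcal{P}_{d+1}=\bigvee_{t=0}^{d}T^{-t}\mathcal{P}$ (using the notation from Section \ref{sec1}), whose normalized Shannon entropy $\frac{1}{d}H_\mu(\mathcal{P}_{d+1})$ converges to $h_\mu(T,\mathcal{P})$ as $d\to\infty$. Part (ii) will then follow from (i) together with Lemma \ref{walters} applied to the refining sequence of partitions generated by the $X_i$.

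For the easy direction of (i), the word $(X(\omega),X(T(\omega)),\ldots,X(T^{\circ d}(\omega)))$ determines all pairwise comparisons among its entries, so $\mathcal{P}_{d+1}$ refines $\mathcal{P}_d^{X,T}$, that is, $\mathcal{P}_d^{X,T}\prec\mathcal{P}_{d+1}$. Hence $H_\mu(\mathcal{P}_d^{X,T})\le H_\mu(\mathcal{P}_{d+1})$, and dividing by $d$ and taking $\limsup$ as $d\to\infty$ yields $h^\ast_\mu(T,X)\le h_\mu(T,\mathcal{P})$. For the reverse direction, let $q$ be the number of values taken by $X$. The crucial counting observation is that each atom of $\mathcal{P}_d^{X,T}$ contains at most $2^q$ atoms of $\mathcal{P}_{d+1}$, uniformly in $d$: two orbits sharing the same ordinal pattern on $d+1$ iterates can differ only in which $k$ of the $q$ possible $X$-values are assigned to the $k\le q$ distinct rank-levels of the pattern, and there are $\binom{q}{k}\le 2^q$ such assignments. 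Using the standard conditional-entropy bound $H_\mu(\mathcal{D}\mid\mathcal{C})\le\log C$, valid whenever each atom of $\mathcal{C}$ meets at most $C$ atoms of a refinement $\mathcal{D}$, one obtains
\begin{equation*}
H_\mu(\mathcal{P}_{d+1})\;\le\;H_\mu(\mathcal{P}_d^{X,T})+q\log 2,
\end{equation*}
and dividing by $d$ and letting $d\to\infty$ absorbs the constant and delivers $h_\mu(T,\mathcal{P})\le h^\ast_\mu(T,X)$, completing (i).

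For (ii), the relation $X_i\prec X_{i+1}$ forces $X_i$ to be a nondecreasing function of $X_{i+1}$, so the finite partition generated by $X_i$ is coarser than that generated by $X_{i+1}$. The hypothesis that these partitions jointly generate $\mathcal{A}$ modulo $\mu$ then allows one to invoke Lemma \ref{walters} and conclude that $h_\mu(T,\cdot)$ evaluated on the partition generated by $X_i$ tends to $h_\mu^{\mathrm{KS}}(T)$ as $i\to\infty$. Combined with part (i), which identifies $h^\ast_\mu(T,X_i)$ with this same quantity, one obtains \eqref{modified}.

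The main obstacle is the uniform cardinality bound in the second step of (i): one must verify carefully that the number of $X$-words compatible with a prescribed ordinal pattern stays bounded independently of $d$. This is precisely where finiteness of the value set of $X$ enters, and it is also the reason the definition of $h^\ast_\mu$ uses $\limsup_d\frac{1}{d}H_\mu(\mathcal{P}_d^{X,T})$ rather than the entropy rate $h_\mu(T,\mathcal{P}_d^{X,T})$: the additive $O(1)$ overhead in the above inequality vanishes only under the $1/d$ normalization.
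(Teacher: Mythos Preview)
Your derivation of (ii) from (i) via Lemma \ref{walters} is exactly what the paper does; the paper itself does not prove (i) but attributes it to Amig\'o et al.\ \cite{amigo_2005,amigo_2012}. So your treatment of (ii) matches the paper, and your attempt at (i) goes beyond what the paper provides.

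However, the key counting claim in your proof of (i) is false. You assert that each atom of $\mathcal{P}_d^{X,T}$ meets at most $2^q$ atoms of $\mathcal{P}_{d+1}$, on the grounds that the ordinal pattern fixes $k$ ``rank levels'' and one only has to choose which $k$ of the $q$ values fill them. But $\mathcal{P}_d^{X,T}$ is built from the bisections \eqref{bisection}, which record only whether $X(T^{\circ s}(\omega))<X(T^{\circ t}(\omega))$ or $X(T^{\circ s}(\omega))\geq X(T^{\circ t}(\omega))$; they do \emph{not} separate equality from strict inequality, so the number of rank levels is \emph{not} determined by the cell. Concretely, take $q=2$ and the cell in which $x_s\geq x_t$ for all $0\leq s<t\leq d$: every word of the form $(v_2,\ldots,v_2,v_1,\ldots,v_1)$ lies in this single cell, giving $d+2$ compatible words, which exceeds $2^q=4$ as soon as $d\geq 3$. (Your argument would be correct for $\mathcal{R}_d^{X,T}$, where equalities are recorded, but $h^\ast_\mu$ is defined via $\mathcal{P}_d^{X,T}$.)

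The fix is minor and your overall strategy survives: the number of $q$-ary words compatible with a fixed cell of $\mathcal{P}_d^{X,T}$ is bounded by the number of weakly monotone $\{1,\ldots,q\}$-valued sequences of length $d+1$, namely $\binom{d+q}{q-1}=O(d^{\,q-1})$. Hence
\[
H_\mu(\mathcal{P}_{d+1}) \;\le\; H_\mu(\mathcal{P}_d^{X,T}) + (q-1)\ln(d+q) + O(1),
\]
and dividing by $d$ still kills the correction term as $d\to\infty$. This is essentially the bound used in \cite{amigo_2005}. So replace the uniform constant $2^q$ by this $O(\log d)$ overhead and your argument goes through.
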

One immediately sees that by Lemma \ref{walters} assertion
(ii) follows directly from statement (i). Amig{\'o} et al.~took the right hand side of \eqref{modified}
as their modified concept of permutation entropy before showing its equality to Kolmogorov-Sinai entropy.

We want to finish this section by stating the following general problem, which is interesting on the different levels from the original one-dimensional definition of permutation entropy
to the generalization for finitely or infinitely many observables.
\begin{problem}
Are the Kolmogorov-Sinai entropy and the permutation entropy coinciding and, if not, under which assumptions?
\end{problem}
Note that the pure combinatorial part of the problem is relatively well understood (see Unakafova et al.~\cite{unakafova_et_al_2013}, Keller et al.~\cite{keller_et_al_2012}).

\section{Ordinal time series analysis}\label{sec5}

Ever since the idea of Bandt and Pompe~\cite{bandt_pompe_2002} to consider the rank order
of consecutive values of a time series instead of the values themselves, the ordinal approach attracts increasing attention and is applied in many
scientific fields, for example in biomedical research, engineering and econophysics (see Amig{\'o} et al.~\cite{amigo_eta_al_2014,amigo_eta_al_2013}, Zanin et al.~\cite{zanin_et_al_2012}
and the references given there).

The reason is that the ordinal viewpoint brings with it many advantages especially for measuring complexity, such as
robustness against small noise, simplicity of application and interpretation, and low computational costs.
As mentioned, the determination of Kolmogorov-Sinai entropy is usually not easy, our discussion above, however,
suggests that the ordinal approach can be used as a framework
for estimating the Kolmogorov-Sinai entropy of dynamical systems and suchlike from real world data.

In the following we consider the theory developed in the previous sections in an applied context and discuss
the pro and cons of using this approach in view of studying long and complex time series. A detailed exposition of this ordinal pattern approach
is provided in Keller et al.~\cite{keller_et_al_2007}.
\vspace{3mm}

\textit{Ordinal patterns.}
The task of gaining information about an underlying system via measurements is
a common everyday problem. As already mentioned, this issue is increasingly addressed
by using information lying in the ordinal structure of a system or a time series obtained from it.
This leads to considering the
up and downs in a time series, which can be described via so-called ordinal patterns.

\begin{definition}
For $d\in \mathbb{N}$ denote the \emph{set of permutations} of $\{0,1,\dots,d\}$
by $\Pi_d$.
We say that a real vector $(x_s)_{s=0}^d$
has \emph{ordinal pattern} $\boldsymbol{\pi} = (\pi_0,\pi_1,\dots,\pi_d) \in \Pi_d$
\emph{of order $d$} if
\begin{equation*}
x_{\pi_0}
\geq
x_{\pi_1}
\geq
\dots
\geq
x_{\pi_{d-1}}
\geq
x_{\pi_{d}}
\end{equation*}
and
\begin{equation}\label{equality}
\pi_{u-1}>\pi_u\mbox{ if }x_{\pi_{u-1}} = x_{\pi_u}\mbox{ for any }u \in \{1,2,\dots,d\}.
\end{equation}
Given a time series $(x_t)_{t\in {\mathbb N}_0}$, the \emph{ordinal pattern of order $d$ at time $t$} is defined
as that of $(x_{t+s})_{s=0}^d$ and denoted by $\boldsymbol{\pi}_t$.
\end{definition}

\begin{example}
In Figure~\ref{figure} we consider a time series of $50$ data points where
exemplary the ordinal pattern
$\boldsymbol{\pi}_{10}=(0,5,3,4,6,1,2) \in \Pi_6$
is emphasized,
which corresponds to the order relation of the six successive values at $t=10$, that is
\begin{equation*}
x_{t} > x_{t+5} > x_{t+3} > x_{t+4} > x_{t+6} > x_{t+1} > x_{t+2};\ t= 10.
\end{equation*}

\begin{figure}[htp]
\begin{tikzpicture}
\begin{axis}[
          width=1\textwidth,
          height=0.2\textheight,
          ymin=0,
          xmin=0,
          xmax=50,
          ytick=\empty,
	  xtick={5,10,15,20,25,30,35,40,45},
	  xlabel=$t$,
	  every axis x label/.style={at={(current axis.right of origin)},anchor=north},
          ]
         \addplot [color=gray, mark=*, mark options={solid},line width = 1.0 pt, mark size = 2.0 pt] table {datap.data};
         \addplot [color=black, mark=*, mark options={solid},line width = 1.0 pt, mark size = 2.0 pt] table {data1.data};
         \draw [color=gray, dashed] (axis cs:10,69) -- node[left]{} (axis cs:10,170);
         \draw [color=gray, dashed] (axis cs:11,55) -- node[left]{} (axis cs:11,120);
         \draw [color=gray, dashed] (axis cs:12,43) -- node[left]{} (axis cs:12,110);
          \draw [color=gray, dashed] (axis cs:13,66) -- node[left]{} (axis cs:13,150);
          \draw [color=gray, dashed] (axis cs:14,65) -- node[left]{} (axis cs:14,140);
         \draw [color=gray, dashed] (axis cs:15,68) -- node[left]{} (axis cs:15,160);
         \draw [color=gray, dashed] (axis cs:16,64) -- node[left]{} (axis cs:16,130);
\end{axis}
\end{tikzpicture}
\caption{Illustration of an ordinal pattern of order $d=6$ assigned to six successive values (plotted in the vertical direction) of a time series of $50$ data points.}\label{figure}
\end{figure}
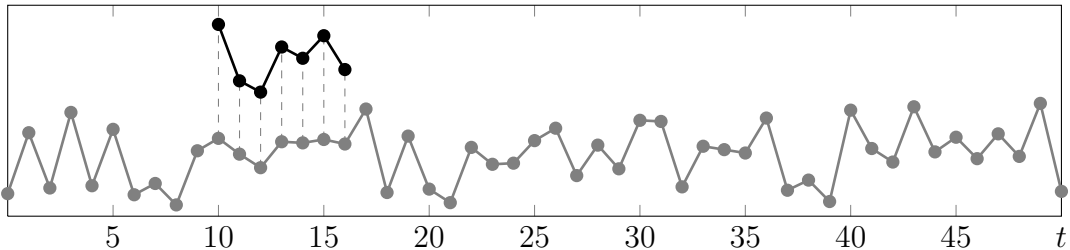
\end{example}

It is easily seen that, following the framework given in Section \ref{sec1},
two states $\omega_1 \in \Omega$ and $\omega_2\in\Omega$ belong to the same part of some ordinal partition $\mathcal{P}_d^{(X_i)_{i=1}^n,T}$
iff the ordinal patterns of the vectors
\begin{equation*}
(X_i(\omega_1),X_i(T(\omega_1)),\ldots ,X(T^{\circ d}(\omega_2))) \text{ and } (X_i(\omega_2),X_i(T(\omega_2)),\ldots ,\linebreak X(T^{\circ d}(\omega_2)))
\end{equation*}
coincide.
Clearly, the other
previous considered partitions
(see Equations \eqref{less},\eqref{qpart} and \eqref{rpart}),
despite some adjustments in terms of equality, can be coherent
assimilated to this ordinal approach by redefining ordinal patterns in terms of the equality of values.
The setting \eqref{equality} is here in some sense arbitrary,
however, the proposed definition of ordinal patterns has established itself.
We will use it in the following to demonstrate
how the previous covered theory provides
interesting and promising tools for extracting the information saved in an ordinal
pattern sequence or suchlike, for example, by estimating the permutation entropy (see Equation \eqref{PE})
or by approximating the Kolmogorov-Sinai entropy.

In order to utilize ordinal patterns for the analysis of a system,
sequential data $(x_t)_{t\in {\mathbb N}}$ obtained from a given measurement are transformed into a series $(\boldsymbol{\pi}_t)_{t\in {\mathbb N}_0}$ of ordinal patterns.
Distributions of ordinal patterns obtained from this approach are the central objects of exploration.

Note that ordinal patterns do not provide a symbolic representation as it is usually considered, since partitions of the state
space are not given a priori, but are created on the basis of the given dynamics. However, the ordinal patterns as `symbols'
are very simple objects being directly obtained from the orbits of the system and containing intrinsic causal information.
For the relationship of symbolic dynamics and representations and ordinal time series analysis see Amig{\'o} et al.~\cite{amigo_eta_al_2014}.

For simplicity, we now restrict our exposition to the one-dimensional case with only one measurement. What we
have in mind is a measure preserving dynamical system $(\Omega,\mathcal{A},\mu,T)$, where $\Omega$ is a Borel subset of ${\mathbb R}$, acting as the model of a system, with
a single observable $X$ being the identity map. The extension of the ideas to the general case is obvious.\vspace{3mm}

\textit{Estimation of ordinal quantities.}
The naive and mainly used estimator of ordinal pattern probabilities, so of the probability of the ordinal partition parts, is the relative
frequency of ordinal patterns in an orbit of some length. For some $t,d\in {\mathbb N}$, some ordinal pattern $\boldsymbol{\pi}$ of order $d$ and some $\omega\in\Omega$ the estimation
is given by the number
\begin{eqnarray*}
\hat{p}_{\boldsymbol{\pi}}=\frac{1}{t-d+1}\#\{s\in\{0,1,\ldots ,t-d\}\mid (X(T^{\circ s}(\omega)),X(T^{\circ s+1}(\omega)),\\\ldots
,X(T^{\circ s+d}(\omega)))\mbox{ has ordinal pattern }\boldsymbol{\pi}\}.
\end{eqnarray*}
Here $t+1$ is the length of the considered orbit of $\omega$. Clearly, the estimation only makes sense in the ergodic case. Then, by
Birkhoff's ergodic theorem, the corresponding estimator is consistent.

If in the ergodic case all $\hat{p}_{\boldsymbol{\pi}}$; $\pi \in \Pi_d$ are determined, it follows
immediately that in the simple case considered
a reasonable estimator for \eqref{PE} is given by the \emph{empirical permutation entropy of order} $d \in \mathbb{N}$:
\begin{equation*}
\hat{h}^\ast_\mu(T,X)= - \frac{1}{d} \sum_{\pi \in \Pi_d} \hat{p}_{\pi} \ln \hat{p}_{\pi}.
\end{equation*}
It gives furthermore also some information on the Kolmogorov-Sinai entropy.
\vspace{3mm}

\textit{Assets and drawbacks.}
Irrespective of the considered ordinal partition, the ordinal approach
brings along some practical advantages and disadvantages. Note that most
difficulties to overcome are common to any sort of time series analysis.

Considering the order
relation between the values of a time series, small inaccuracies in measurements
(e.g. errors between the state of a system and its observed value) are mostly negligible.
Hence, the methods considered are relatively robust towards calibration differences of measuring instruments.
Furthermore, the ordinal approach is easily interpretable and there already exist
efficient methods to perform an ordinal time series analysis in real time. For a deeper
discussion we refer to Riedl et al.~\cite{riedl_et_al_2013} as well as Unakafova and Keller \cite{unakafova_keller_2013}. Last but not least,
a foreknowledge of the data range when analyzing data is usually not necessary.

In contrast, the ordinal analysis of time series can be rather poor if the underlying system is so complex
that such a large value $d$ is needed that the computational capacity is insufficient. If, for example, the permutation
entropy of a dynamical system is very large, its estimation by the empirical permutation entropy is problematic.
Note that generally also for simple systems the convergency of empirical permutation entropies of order $d$ to the permutation entropy
can be rather slow, which is the reason for considering a conditional adaption of the permutation entropy (see Unakafov and Keller \cite{unakafov_keller_2014}).

In addition, the choice of a
suitable order $d$ with respect to the length
of the original time series is affected by common problems.
Large values of $d$ are needed to evaluate encapsulated information as accurate as possible but a
large $d$ grants $(d+1)!$ possible ordinal patterns which have to be considered if nothing is known about
the original time series. If one chooses an overlarge $d$ relative to the length of a time series, it can happen that not all ordinal patterns which are substantial
for describing the underlying dynamics are observed in the ordinal pattern distribution or suchlike. This is known
as \emph{undersampling}.

Moreover, ordinal time series analysis can lead to
an arbitrary poor approximation of the Kolmogorov-Sinai entropy or poor representation of
the underlying dynamics by the statistics,
especially while working on wrong assumptions, e.g.~a given system fails to be ergodic
or the chosen observables cause information loss while measuring.
The next section alludes to the latter problem.

\section{Algebra reconstruction dimension}\label{sec6}
Theorems \ref{main} claims that the Kolmogo\-rov-Sinai entropy of $T$ can be computed provided that we have sufficiently many observables ``generating'' $\mathcal{A}$ up to $\mu$-measure zero.
Essential for applications, the natural question arises how we can decrease the number of observables as much as possible.
In this section we briefly review the known results in this direction.

\textit{Only one observable.} The following example shows that \emph{theoretically} in most real cases we can find only one such observable.

\begin{example}\label{exmp:std_spaces}
Let $I=[0,1]$, $Z$ be a separable complete metric space (such spaces are called \emph{Polish}), $\Omega\subset Z$ be its uncountable Borel subset, and $\mathcal{A} := \mathcal{B}(\Omega)$ be the Borel $\sigma$-algebra of $\Omega$.
Then the pair $(\Omega,\mathcal{B}(\Omega))$ is called a \emph{standard Borel space}.
It is well known, e.g.~see Kechris \cite[Proposition 12.1]{kechris_1995}, that then there exists a measurable isomorphism of $(\Omega,\mathcal{B}(\Omega))$ onto the space $\bigl(I, \mathcal{B}(I)\bigr)$, that is a bijection $\mathbf{X}:\Omega\to I$ such that
$\mathbf{X}^{-1}(\mathcal{B}(I)) = \mathcal{B}(\Omega)$.

Let $\mu$ be a measure on $(\Omega,\mathcal{B}(\Omega)$ and $T:\Omega\to\Omega$ be any $\mu$-preserving map.
Then
\begin{equation*}
\mathcal{B}(\Omega)
\supset
\sigma((\mathbf{X}\circ T^{\circ t})_{t\in\mathbb{N}_0})
\supset
\sigma(\mathbf{X})
=
\mathbf{X}^{-1}(\mathcal{B}(I))
=
\mathcal{B}(\Omega),
\end{equation*}
that is
$\sigma((\mathbf{X}\circ T^{\circ t})_{t\in\mathbb{N}_0}) = \mathcal{B}(\Omega)$.
Moreover, as every separable metric space $Z$ can be embedded into a Hilbert cube being a compact space, compare to Hurewicz and Wallmann \cite[Chapter V, \S5, Theorem V4]{hurewicz_wallman_1941},
we see that condition \eqref{nonergodic} holds for $\Omega \subset Z$ as well, and therefore by Theorem \ref{main} the Kolmogorov-Sinai entropy $h^{\mathrm{KS}}(T)$ of $T$
can be computed via the formula \eqref{equ:hKS_lim_perm}.
\end{example}

Notice that the function $\mathbf{X}:\Omega\to[0,1] \subset \mathbb{R}$ from Example \ref{exmp:std_spaces} is not in general continuous and its explicit construction is very complicated.
Therefore it is not useful for real applications.
This leads to the following notion.

\begin{definition}\label{def:ard}
Let $(\Omega, \mathcal{B}(\Omega))$ be a standard Borel space with measure $\mu$ on $\mathcal{B}(\Omega)$, and $T:\Omega\to\Omega$ be a $\mathcal{B}(\Omega)$-$\mathcal{B}(\Omega)$-measurable map.
By the \emph{algebra reconstruction dimension} of $T$ with respect to $\mu$ we will mean the minimal integer number $n\geq1$ such that there exists a \emph{continuous} map $\mathbf{X}:\Omega \to\mathbb{R}^n$ satisfying
\begin{equation}\label{equ:def:ard}
\sigma((\mathbf{X}\circ T^{\circ t})_{t\in\mathbb{N}_0}) \overset{\mu}{\supset} \mathcal{B}(\Omega).
\end{equation}
This number will be denoted by $\mathrm{ard}_{\mu}(T)$.
If such $n$ does not exist, then we will assume that $\mathrm{ard}_{\mu}(T)=\infty$.
\end{definition}
Thus $\mathrm{ard}_\mu (T)$ is the minimal number of \emph{continuous} observables needed to approximate the Kolmogorov-Sinai entropy via \eqref{equ:hKS_lim_perm}.

Given a map $T:\Omega\to \Omega$, a map $\mathbf{X}:\Omega\to\mathbb{R}^n$ and $t\in\mathbb{N}$ one can define the following \emph{$t$-reconstruction} map
\begin{equation*}
\Lambda_{\mathbf{X}, T, t}
=
\bigl(\mathbf{X},  \mathbf{X}\circ T,  \ldots,  \mathbf{X}\circ T^{\circ t-1}\bigr) : \Omega\to\mathbb{R}^{nt}
\end{equation*}
and an \emph{$\infty$-reconstruction} map
\begin{equation*}
\Lambda_{\mathbf{X}, T, \infty}
=
\bigl(\mathbf{X},  \mathbf{X}\circ T,  \mathbf{X}\circ T^{\circ 2},  \ldots\bigr) : \Omega\to\mathbb{R}^{\infty}.
\end{equation*}
Evidently, $\Lambda_{\mathbf{X},T,1} = \mathbf{X}$,
\begin{equation*}
\sigma((\mathbf{X}\circ T^{\circ s})_{s=0}^{t-1}) = \sigma(\Lambda_{\mathbf{X}, T, t}),
\end{equation*}
and
\begin{equation*}
\sigma(\mathbf{X}) \ \subset \ \sigma(\Lambda_{\mathbf{X}, T, t}) \
\subset
\ \sigma(\Lambda_{\mathbf{X}, T, t+1}) \
\subset
\ \sigma(\Lambda_{\mathbf{X}, T, \infty}); \ t\in\mathbb{N}.\end{equation*}
In particular, \eqref{equ:def:ard} can be reformulated as follows:
\begin{equation}\label{equ:sigmaLambda_in_BOmega}
\sigma(\Lambda_{\mathbf{X}, T, \infty}) \ \overset{\mu}{\supset} \ \mathcal{B}(\Omega).
\end{equation}

Before discussing $\mathrm{ard}_{\mu}(T)$ we will present an example for the existence
of one separating observable, that is $\mathbf{X}:\Omega\to\mathbb{R}$ satisfying \eqref{equ:sigmaLambda_in_BOmega},
and therefore allowing to approximate the Kolmogorov-Sinai entropy by formula \eqref{equ:hKS_lim_perm}, see Theorem \ref{th:gen_partitions} below.
However, now this observable is ``discrete'', i.e.~it takes at most countable many values.

\begin{definition}\label{def:gen_partition}
Let $(\Omega,\mathcal{A},\mu,T)$ be a measure-preserving dynamical system.
An at most countable partition $\mathcal{C} = \{C_l\}_{l=1}^q \subset \mathcal{A}$ of $\Omega$ for some $q\in\mathbb{N}\cup\{\infty\}$, is called \emph{generating} with respect to $T$, if
\begin{equation*}
\sigma ((T^{-t} \mathcal{C})_{t\in {\mathbb N}_0})\overset{\mu}{=}  \mathcal{A},
\end{equation*}
where $T^{-t} \mathcal{C} = \{ (T^{\circ t})^{-1}C_l\}_{l=1}^q$.
\end{definition}
The following lemma is evident.
\begin{lemma}\label{lm:func_for_gen_part}
Suppose a measure-preserving dynamical system $(\Omega,\mathcal{A},\mu,T)$ has a generating partition $\mathcal{C}=\{C_l\}_{l=1}^q$; $q\in\mathbb{N}\cup\{\infty\}$.
Define a function $\mathbf{X}:\Omega\to\mathbb{R}$ by $\mathbf{X}=\sum_{l=1}^ql\cdot {\bf 1}_{C_l}$ (compare Remark \ref{refinement}).
Then $\sigma(\mathbf{X}) = \sigma(\mathcal{C})$, whence
\begin{equation*}
\sigma(\Lambda_{\mathbf{X}, T, \infty})
=
\sigma\bigl((\mathbf{X}\circ T^{\circ t})_{t\in\mathbb{N}_0}\bigr)
=
\sigma ((T^{-t} \mathcal{C})_{t\in {\mathbb N}_0})
\overset{\mu}{=}
\mathcal{A}.
\end{equation*}
\end{lemma}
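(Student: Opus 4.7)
The plan is to verify the identity $\sigma(\mathbf{X}) = \sigma(\mathcal{C})$ first, and then to chain it with routine pull-back identities for $\sigma$-algebras together with the defining property of a generating partition.

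For the identity $\sigma(\mathbf{X}) = \sigma(\mathcal{C})$: the labels $1,2,\ldots,q$ assigned by $\mathbf{X}$ to the parts $C_l$ are pairwise distinct, so $\mathbf{X}^{-1}(\{l\}) = C_l$ for every $l$ (using that each $C_l$ is nonempty, as part of the definition of a partition). Since each singleton $\{l\} \subset \mathbb{R}$ is Borel, each $C_l$ lies in $\sigma(\mathbf{X})$, whence $\sigma(\mathcal{C}) \subset \sigma(\mathbf{X})$. Conversely, $\mathbf{X}$ is constant on each atom of $\mathcal{C}$, hence $\sigma(\mathcal{C})$-measurable, which gives $\sigma(\mathbf{X}) \subset \sigma(\mathcal{C})$.

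For the subsequent chain of equalities: by the very definition of $\Lambda_{\mathbf{X}, T, \infty}$ as the countable product map whose coordinates are $\mathbf{X} \circ T^{\circ t}$, the $\sigma$-algebra it generates equals the one generated by all these coordinate random variables, giving the first equality. For the second, one uses the standard fact that for any $\mathcal{A}$-$\mathcal{A}$-measurable map $S : \Omega \to \Omega$ and any random variable $f$ (or collection of sets) one has $\sigma(f \circ S) = S^{-1}\sigma(f)$, and that $S^{-1}$ commutes with the $\sigma$-operation applied to a family; combining this with $\sigma(\mathbf{X}) = \sigma(\mathcal{C})$ and applying it to $S = T^{\circ t}$ yields
\begin{equation*}
\sigma(\mathbf{X} \circ T^{\circ t})
= (T^{\circ t})^{-1}\sigma(\mathbf{X})
= (T^{\circ t})^{-1}\sigma(\mathcal{C})
= \sigma(T^{-t}\mathcal{C}),
\end{equation*}
and taking the join over $t \in \mathbb{N}_0$ produces the middle equality. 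Finally, the last identity $\sigma((T^{-t}\mathcal{C})_{t \in \mathbb{N}_0}) \overset{\mu}{=} \mathcal{A}$ is precisely the definition of $\mathcal{C}$ being a generating partition as recorded in Definition \ref{def:gen_partition}.

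There is no real obstacle here; the only care needed is notational, in tracking that the labels of $\mathbf{X}$ are attained (so that $\mathbf{X}^{-1}(\{l\}) = C_l$ exactly) and that the pull-back $(T^{\circ t})^{-1}$ distributes over generating families of sets. All three equalities in the displayed chain are functorial consequences of the single point $\sigma(\mathbf{X}) = \sigma(\mathcal{C})$.
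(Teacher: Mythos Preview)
Your proof is correct. The paper does not actually supply a proof of this lemma; it simply declares that ``the following lemma is evident'' and states it. Your argument spells out precisely the routine verification the authors had in mind: the two inclusions giving $\sigma(\mathbf{X})=\sigma(\mathcal{C})$, the pull-back identity $\sigma(\mathbf{X}\circ T^{\circ t})=(T^{\circ t})^{-1}\sigma(\mathbf{X})=\sigma(T^{-t}\mathcal{C})$, and the final appeal to Definition~\ref{def:gen_partition}. There is nothing to compare, since you are supplying the details the paper omits.
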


In general, a $\mu$-preserving map does not have a generating partition.
Nevertheless, for non-singular ergodic automorphisms of standard probability spaces such partitions do exist, what we discuss now.
First we recall necessary definitions.

Let $(\Omega, \mathcal{A}, \mu)$ be a probability space.
The measure $\mu$ is called \emph{complete} if
for any subset $A\in\mathcal{A}$ with $\mu(A)=0$ every its subset $B$ also belongs to $\mathcal{A}$.

A countable family of sets $\{A_l\}_{l\in\mathbb{N}} \subset \mathcal{A}$ is called a \emph{complete basis} of $(\Omega, \mathcal{A}, \mu)$ if
\begin{enumerate}
\item[\rm(a)]
for each $A\in\mathcal{A}$ there exists a $B \in \sigma(\{A_i\}_{l=1}^\infty)$ with $A \subset B$ and $\mu(B\setminus A) = 0$;
\item[\rm(b)]
for any $\omega_1,\omega_2\in\Omega$ there exists an $l\in\mathbb{N}$ such that $\omega_1\in A_l$ and $\omega_2\in\Omega\setminus A_l$;
\item[\rm(c)]
each intersection $\bigcap_{l\in\mathbb{N}} B_l$, where every $B_l$ is either $A_l$ or $\Omega\setminus A_l$, is non-empty.
\end{enumerate}

A probability space $(\Omega, \mathcal{A}, \mu)$ is called \emph{standard} if it has a complete basis and $\mu$ is complete.

It has been proved by Rohlin \cite{rohlin_1961} that every standard probability space with non-atomic measure is isomorphic with the probability space $(I, \mathcal{B}(I), \lambda)$, where $\lambda$ is the Lebesgue measure on $I$.

Recall also that a one-to-one transformation $T:\Omega\to\Omega$ is non-singular with respect to a measure $\mu$ if it is bi-measurable, i.e.~$T^{-1}\mathcal{A}=\mathcal{A}$ and $T\mathcal{A}=\mathcal{A}$, and $\mu(A)=0$ if and only if $\mu(T(A))=0$ for all $A\in \mathcal{A}$.

The following theorem is a consequence of results by Rohlin \cite{rohlin_1961}, Parry \cite{parry_1966} and Krieger \cite{krieger_1970} about the existence of countable and finite generating partitions of ergodic maps.

\begin{theorem}\cite{rohlin_1961,parry_1966,krieger_1970}\label{th:gen_partitions}
Let $(\Omega, \mathcal{B}(\Omega), \mu)$ be a standard probability space, and $T:\Omega\to\Omega$ be a non-singular ergodic $\mu$-preserving map.
Then $\Omega$ has a countable generating partition with respect to $T$.
Hence there is a discrete measurable function $\mathbf{X}:\Omega\to\mathbb{R}$ taking at most countable distinct values and satisfying \eqref{equ:sigmaLambda_in_BOmega}.

Moreover, if $h^{\mathrm{KS}}(T)<\infty$, then $T$ admits a \emph{finite} generating partition, and so $\mathbf{X}$ can be assumed to take only finitely many distinct values.
\end{theorem}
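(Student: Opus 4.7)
The plan is to assemble three classical theorems from ergodic theory --- Rohlin's countable generator theorem \cite{rohlin_1961}, its extension by Parry \cite{parry_1966}, and Krieger's finite generator theorem \cite{krieger_1970} --- and then to extract the required observable $\mathbf{X}$ directly from Lemma \ref{lm:func_for_gen_part}. Nothing genuinely new has to be proved; the work consists in verifying that the hypotheses in each of those classical results match the notions of ``standard probability space'' and ``generating partition'' adopted here.

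First I would verify the hypotheses of Rohlin--Parry: the space $(\Omega,\mathcal{B}(\Omega),\mu)$ is standard (in particular, by Rohlin's classification, it is isomorphic to $(I,\mathcal{B}(I),\lambda)$ up to an at most countable atomic part), and $T$ is non-singular, ergodic and $\mu$-preserving. Their theorems then provide a countable partition $\mathcal{C}=\{C_l\}_{l=1}^{q}\subset\mathcal{B}(\Omega)$, with $q\in\mathbb{N}\cup\{\infty\}$, such that $\sigma((T^{-t}\mathcal{C})_{t\in\mathbb{N}_0})\overset{\mu}{=}\mathcal{B}(\Omega)$; that is, $\mathcal{C}$ is generating in the sense of Definition \ref{def:gen_partition}.

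Next, define the discrete observable $\mathbf{X}:\Omega\to\mathbb{R}$ by $\mathbf{X}=\sum_{l=1}^{q} l\cdot \mathbf{1}_{C_l}$, which takes at most countably many distinct values. Then $\sigma(\mathbf{X})=\sigma(\mathcal{C})$, so Lemma \ref{lm:func_for_gen_part} yields
\begin{equation*}
\sigma(\Lambda_{\mathbf{X},T,\infty}) \ = \ \sigma\bigl((\mathbf{X}\circ T^{\circ t})_{t\in\mathbb{N}_0}\bigr) \ = \ \sigma\bigl((T^{-t}\mathcal{C})_{t\in\mathbb{N}_0}\bigr) \ \overset{\mu}{=} \ \mathcal{B}(\Omega),
\end{equation*}
which is exactly the separation condition \eqref{equ:sigmaLambda_in_BOmega}.

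For the last sentence, I add the hypothesis $h^{\mathrm{KS}}(T)<\infty$ and invoke Krieger's finite generator theorem \cite{krieger_1970}: under the same standing assumptions on $(\Omega,\mathcal{B}(\Omega),\mu)$ and $T$, there exists a \emph{finite} generating partition $\mathcal{C}$ of cardinality at most $\lceil e^{h^{\mathrm{KS}}(T)}\rceil+1$. Applying the same construction to this finite $\mathcal{C}$ produces an observable $\mathbf{X}$ that takes only finitely many values and still satisfies \eqref{equ:sigmaLambda_in_BOmega}. The main --- and essentially only --- obstacle is bibliographic rather than mathematical: one must confirm that the notions of ``standard probability space'' and ``generating partition'' employed by Rohlin, Parry and Krieger agree with those recalled in this section, which is guaranteed by Rohlin's isomorphism theorem \cite{rohlin_1961}.
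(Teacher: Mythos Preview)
Your proposal is correct and matches the paper's treatment exactly: the paper does not give a self-contained proof but simply states the theorem as a consequence of the cited results of Rohlin, Parry and Krieger, with the passage from a generating partition to the discrete observable $\mathbf{X}$ handled by the preceding Lemma~\ref{lm:func_for_gen_part}. Your write-up just makes this derivation explicit, including the application of Krieger's theorem for the finite-entropy case.
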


\textit{The continuous case.} Notice that the function $\mathbf{X}$ from Theorem \ref{th:gen_partitions} is slightly better than the one from Example \ref{exmp:std_spaces}, as it takes a discrete set of values mutually distinct for distinct elements of the
generating partition $\mathcal{C}$.
Nevertheless, it is hard to construct as it requires to know a generating partition for $T$, and so it is not useful for application as well.

Now we will consider the opposite situation when almost any continuous map $\mathbf{X}:\Omega\to\mathbb{R}^n$ satisfies \eqref{equ:sigmaLambda_in_BOmega}.

\begin{lemma}\label{lm:emb_th_ard_2k1}
Let $\Omega$ be a Polish space admitting an embedding $\mathbf{X}:\Omega\to\mathbb{R}^{n}$.
Then for any measure $\mu$ on $\mathcal{B}(\Omega)$ and any $\mu$-preserving map $T$, we have that $\mathrm{ard}_{\mu}(T)\leq n$.
In particular, if $\dim\Omega=k$; $k \in \mathbb{N}$, then $\mathrm{ard}_{\mu}(T)\leq 2k+1$.
\end{lemma}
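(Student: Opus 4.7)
\textit{Plan of proof.} The key observation is that the definition of $\mathrm{ard}_{\mu}(T)$ only asks for the $\sigma$-algebra $\sigma((\mathbf{X}\circ T^{\circ t})_{t\in\mathbb{N}_0})$ to $\mu$-contain $\mathcal{B}(\Omega)$, and this is implied by the already stronger relation $\sigma(\mathbf{X})=\mathcal{B}(\Omega)$. So the strategy is to show that a topological embedding $\mathbf{X}:\Omega\to\mathbb{R}^n$ alone already induces this equality of Borel $\sigma$-algebras, and then to invoke the Menger--N\"obeling embedding theorem for the dimensional statement.

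First, suppose $\mathbf{X}:\Omega\to\mathbb{R}^n$ is an embedding. Then $\mathbf{X}$ is continuous and a homeomorphism from $\Omega$ onto $\mathbf{X}(\Omega)$ equipped with the subspace topology inherited from $\mathbb{R}^n$. Homeomorphisms preserve Borel $\sigma$-algebras, so $\mathbf{X}^{-1}\bigl(\mathcal{B}(\mathbf{X}(\Omega))\bigr)=\mathcal{B}(\Omega)$. Since the Borel sets of a topological subspace are exactly the intersections of ambient Borel sets with the subspace, $\mathcal{B}(\mathbf{X}(\Omega))=\{B\cap \mathbf{X}(\Omega)\mid B\in\mathcal{B}(\mathbb{R}^n)\}$, and therefore
\begin{equation*}
\sigma(\mathbf{X})
\;=\;
\mathbf{X}^{-1}(\mathcal{B}(\mathbb{R}^n))
\;=\;
\mathbf{X}^{-1}\bigl(\mathcal{B}(\mathbf{X}(\Omega))\bigr)
\;=\;
\mathcal{B}(\Omega).
\end{equation*}
Consequently $\sigma((\mathbf{X}\circ T^{\circ t})_{t\in\mathbb{N}_0})\supset \sigma(\mathbf{X})=\mathcal{B}(\Omega)$, which is condition \eqref{equ:def:ard}. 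Since a continuous map $\Omega\to\mathbb{R}^n$ with the required property exists, Definition \ref{def:ard} yields $\mathrm{ard}_{\mu}(T)\leq n$. Note that $T$ and $\mu$ play no role here whatsoever.

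For the second assertion, I would invoke the classical Menger--N\"obeling embedding theorem (the Hurewicz--Wallman reference already cited in the paper): every separable metrizable space of topological dimension $k$ admits a topological embedding into $\mathbb{R}^{2k+1}$. Polish spaces are in particular separable metrizable, so if $\dim\Omega=k$, one obtains an embedding $\mathbf{X}:\Omega\to\mathbb{R}^{2k+1}$, and the first part immediately gives $\mathrm{ard}_{\mu}(T)\leq 2k+1$.

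The only non-routine ingredient is the invocation of Menger--N\"obeling; the measure-theoretic part reduces to the elementary fact that a homeomorphism onto a Borel subspace of $\mathbb{R}^n$ transports Borel $\sigma$-algebras. I do not foresee a genuine obstacle, as no ergodicity, invariance, or analytic property of $T$ is required.
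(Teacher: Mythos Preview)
Your proof is correct and follows essentially the same approach as the paper: both argue that an embedding $\mathbf{X}$ satisfies $\sigma(\mathbf{X})=\mathbf{X}^{-1}(\mathcal{B}(\mathbb{R}^n))=\mathcal{B}(\Omega)$, whence the $\infty$-reconstruction $\sigma$-algebra is all of $\mathcal{B}(\Omega)$, and then invoke the Menger--N\"obeling embedding theorem from Hurewicz--Wallman for the dimensional bound. Your version is slightly more explicit about the subspace Borel-algebra step, while the paper adds the extra observation that the embeddings form a residual subset of $C(\Omega,\mathbb{R}^{2k+1})$, so that almost every family of $2k+1$ continuous observables works.
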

\begin{proof}
Since $\mathbf{X}$ is an embedding, we obtain that $\sigma(\mathbf{X}) = \mathbf{X}^{-1}(\mathcal{B}(\mathbb{R}^n)) = \mathcal{B}(\Omega)$, whence
$\sigma(\Lambda_{\mathbf{X}, T, \infty}) = \mathcal{B}(\Omega)$ as well.

The second statement follows from the well known fact that every $k$-dimensional separable metric space $\Omega$ can be embedded into $\mathbb{R}^{2k+1}$, \cite[Chapter V, \S4, Theorem V3]{hurewicz_wallman_1941}.
Moreover, by the same theorem the set of embeddings $\mathrm{Emb}(\Omega, \mathbb{R}^{2k+1})$ is residual (and, in particular, dense) in the space $C(\Omega,\mathbb{R}^{2k+1})$ of all continuous maps.
Therefore \emph{almost every} family of $2k+1$ \emph{continuous} observables will allow to approximate the Kolmogorov-Sinai entropy of $T$.
\end{proof}

The next statement is a slight generalization of Theorem 2.2 from Keller \cite{keller_2012}.

\begin{theorem}\label{th:keller_diff}
Let $\Omega$ be a smooth manifold and $\mathcal{D}(\Omega)$ be the group of its $C^{\infty}$ diffeomorphisms.
Then there exists a residual subset $\mathcal{W}$ of $\mathcal{D}(\Omega)$ such that $\mathrm{ard}_{\mu}(T)=1$ for each $T\in\mathcal{W}$ and any measure $\mu$ preserved by $T$.
\end{theorem}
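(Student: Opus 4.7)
The plan is to deduce the theorem from Takens' delay embedding theorem (Takens~\cite{takens_81}, and in the strengthened form of Sauer et al.~\cite{sauer_et_al_1991}), which guarantees that for a generic pair $(T,\mathbf{X})$ in $\mathcal{D}(\Omega)\times C^{\infty}(\Omega,\mathbb{R})$---equipped with the Whitney $C^{\infty}$ topology---the $(2k+1)$-delay reconstruction
\[
\Lambda_{\mathbf{X},T,2k+1}:\Omega\to\mathbb{R}^{2k+1},\qquad k=\dim\Omega,
\]
is a smooth embedding. The first task is to extract from this a residual subset $\mathcal{W}\subset\mathcal{D}(\Omega)$ such that for every $T\in\mathcal{W}$ the slice
\[
\mathcal{R}_T:=\{\mathbf{X}\in C^{\infty}(\Omega,\mathbb{R})\mid \Lambda_{\mathbf{X},T,2k+1}\text{ is an embedding}\}
\]
is itself residual in $C^{\infty}(\Omega,\mathbb{R})$, and in particular non-empty. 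One way to do this is via the Kuratowski-Ulam theorem applied to the residual subset of the product; an equivalent route, matching the structure of Takens' original argument, is to take $\mathcal{W}$ to be the Kupka-Smale residual subset of $\mathcal{D}(\Omega)$---diffeomorphisms with only finitely many hyperbolic periodic points of each period $\leq 2k$---and then carry out the transversality perturbations in the $\mathbf{X}$ variable alone.

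The second step is essentially automatic. Given $T\in\mathcal{W}$ and any $\mathbf{X}\in\mathcal{R}_T$, the map $\Lambda_{\mathbf{X},T,2k+1}$ is a topological embedding of $\Omega$ into $\mathbb{R}^{2k+1}$ and therefore a Borel isomorphism onto its image, so
\[
\mathcal{B}(\Omega)=\sigma(\Lambda_{\mathbf{X},T,2k+1})\subset\sigma(\Lambda_{\mathbf{X},T,\infty}).
\]
This is precisely condition~\eqref{equ:sigmaLambda_in_BOmega}, and Definition~\ref{def:ard} then yields $\mathrm{ard}_{\mu}(T)\leq 1$ for every $T$-invariant measure $\mu$; the reverse bound is built into the definition. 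Crucially, $\mathcal{W}$ is constructed purely topologically and without any reference to $\mu$, so the same residual set serves every such measure simultaneously, matching the statement of the theorem.

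The main obstacle is Step~1 in the non-compact setting. Takens' theorem is classically stated for compact $\Omega$, where Kupka-Smale delivers the residual $\mathcal{W}$ directly and local-to-global issues trivialize. For a general smooth manifold, one has to work with the strong Whitney $C^{\infty}$ topology, verify the analogous Kupka-Smale genericity in that topology, and check that the transversality perturbations producing $\mathcal{R}_T$ remain globally defined continuous functions on all of $\Omega$. This technical refinement is presumably where the \emph{slight generalization} over Keller~\cite{keller_2012} resides.
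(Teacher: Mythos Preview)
Your argument is correct and follows the same overall strategy as the paper's proof: invoke Takens to obtain a residual set $\mathcal{E}_{2k+1}\subset C^{\infty}(\Omega,\mathbb{R})\times\mathcal{D}(\Omega)$ on which $\Lambda_{\mathbf{X},T,2k+1}$ is an embedding, pass from this to a residual set $\mathcal{W}\subset\mathcal{D}(\Omega)$, and read off $\mathrm{ard}_{\mu}(T)=1$ from the fact that an embedding generates $\mathcal{B}(\Omega)$. The only difference lies in how $\mathcal{W}$ is extracted. The paper writes $\mathcal{E}_{2k+1}=\bigcap_{l} U_l$ with $U_l$ open dense, takes the open projection $p$ onto the $\mathcal{D}(\Omega)$ factor, and sets $\mathcal{W}=p(\mathcal{E}_{2k+1})$, arguing that this equals $\bigcap_l p(U_l)$ and is therefore residual. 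You instead invoke Kuratowski--Ulam (or, equivalently, the Kupka--Smale step already present inside Takens' proof) to obtain residually many $T$ whose slice $\mathcal{R}_T$ is itself residual, hence non-empty. Your route is a bit more robust, since in general one only has $p\bigl(\bigcap_l U_l\bigr)\subset\bigcap_l p(U_l)$, and it is the non-emptiness of the \emph{slice} $\{\mathbf{X}:(\mathbf{X},T)\in\mathcal{E}_{2k+1}\}$ that is actually needed; Kuratowski--Ulam delivers exactly that. The paper does not separately treat the non-compact refinement you raise.
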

\begin{proof}
Let $\dim\Omega=k$.
For each $n\in\mathbb{N}$ let 
\begin{equation*}
\mathcal{E}_{n}
=
\{ (\mathbf{X}, T) \in C^{\infty}(\Omega,\mathbb{R})\times\mathcal{D}(\Omega) \mid \Lambda_{\mathbf{X},T,n}:\Omega\to\mathbb{R}^{n} \ \text{is an embedding}\}.
\end{equation*}
Thus if $(\mathbf{X}, T)\in \mathcal{E}_{n}$, then $\mathrm{ard}_{\mu}(T) = 1$.

It is proved by Takens \cite{takens_81} that if $n\geq 2 k + 1$, then $\mathcal{E}_{n}$ is residual (and in particular non-empty and everywhere dense) in $C^{\infty}(\Omega,\mathbb{R})\times\mathcal{D}(\Omega)$.
Thus we have that $\mathcal{E}_{2k+1} = \bigcap_{l=1}^{\infty} U_l$, where each $U_i$ is open and everywhere dense in the space $C^{\infty}(\Omega,\mathbb{R})\times\mathcal{D}(\Omega)$.
Let $p:C^{\infty}(\Omega,\mathbb{R})\times\mathcal{D}(\Omega) \to \mathcal{D}(\Omega)$ be the natural projection, i.e.~$p(\mathbf{X},T) = T$.
It is a standard fact from general topology that $p$ is an open map, whence
\begin{equation*}
\mathcal{W} = p\bigl(\mathcal{E}_{2k+1}\bigr)
=
\bigcap_{l=1}^{\infty} p(U_l)
\end{equation*}
is a residual subset of $\mathcal{D}(\Omega)$.
Then $\mathrm{ard}_{\mu}(T)=1$ for each $T\in \mathcal{W}$ and any measure $\mu$ preserved by $T$.
\end{proof}

Notice that the latter result does not guarantee that for \textit{any} measure $\mu$ on $\mathcal{B}(\Omega)$ preserved by some diffeomorphism $T$ there exists some other $\mu$-preserving diffeomorphism $T'$ with $\mathrm{ard}_{\mu}(T')=1$.

The following notion allows to decrease the dimension $2k+1$ in Lemma \ref{lm:emb_th_ard_2k1} by putting some restrictions on $\mu$.

\begin{definition}
Let $\mathbf{X}:\Omega\to R$ be a continuous map between topological spaces.
Then the following subset of $\Omega$
\begin{equation*}
N_{\mathbf{X}} = \{ \omega\in \Omega \mid \mathbf{X}^{-1}( \mathbf{X}(\omega)) \not= \{\omega\} \}
\end{equation*}
will be called the \emph{set of non-injectivity} of $\mathbf{X}$.
\end{definition}
\begin{lemma}{\rm (Antoniouk et al.~\cite[Theorem 4.2]{antoniouk_et_al_2013})}
Let $\mathbf{X}:\Omega\to R$ be a continuous map between Polish spaces and $\mu$ be a measure on $\mathcal{B}(\Omega)$.
Suppose there exists a Borel subset $D$ such that $N_{\mathbf{X}} \subset D$ and $\mu(D)=0$.
Then $\sigma(\mathbf{X}) \overset{\mu}{=} \mathcal{B}(\Omega)$.
\end{lemma}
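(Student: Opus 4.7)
The plan is to establish $\sigma(\mathbf{X}) \overset{\mu}{=} \mathcal{B}(\Omega)$ by noting that the inclusion $\sigma(\mathbf{X}) \subset \mathcal{B}(\Omega)$ is immediate from the continuity (hence Borel measurability) of $\mathbf{X}$, so only the reverse containment modulo $\mu$ requires work. Concretely, for each $A\in\mathcal{B}(\Omega)$ I want to produce some $B\in\sigma(\mathbf{X})=\mathbf{X}^{-1}(\mathcal{B}(R))$ with $A\triangle B$ contained in a Borel $\mu$-null set.

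The idea is to restrict attention to $\Omega':=\Omega\setminus D$, which is Borel and of full $\mu$-measure by assumption, and on which the hypothesis $N_{\mathbf{X}}\subset D$ forces $\mathbf{X}$ to be injective. First I would check this injectivity: if $\omega\in\Omega'$, then $\omega\notin N_{\mathbf{X}}$, so $\mathbf{X}^{-1}(\mathbf{X}(\omega))=\{\omega\}$, and the restriction $\mathbf{X}|_{\Omega'}$ is a continuous injection from a Borel subset of a Polish space into a Polish space.

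Next I would invoke the Lusin--Souslin theorem from descriptive set theory: a continuous injection of a Borel subset of a Polish space into a Polish space sends Borel sets to Borel sets (equivalently, it is a Borel isomorphism onto its image). Applied to $\mathbf{X}|_{\Omega'}$, this yields $\mathbf{X}(A\cap\Omega')\in\mathcal{B}(R)$ for every $A\in\mathcal{B}(\Omega)$, and so
\begin{equation*}
B \;:=\; \mathbf{X}^{-1}\bigl(\mathbf{X}(A\cap\Omega')\bigr) \;\in\; \sigma(\mathbf{X}).
\end{equation*}

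It then remains to verify $A\triangle B\subset D$. If $\omega\in A\setminus B$, then $\mathbf{X}(\omega)\notin\mathbf{X}(A\cap\Omega')$, so $\omega\notin A\cap\Omega'$; since $\omega\in A$, this forces $\omega\in D$. If $\omega\in B\setminus A$, then $\mathbf{X}(\omega)=\mathbf{X}(\omega')$ for some $\omega'\in A\cap\Omega'$, and the injectivity of $\mathbf{X}|_{\Omega'}$ rules out $\omega\in\Omega'$ (which would give $\omega=\omega'\in A$); hence again $\omega\in D$. Since $D$ is a Borel $\mu$-null set, this gives $\mu(A\triangle B)=0$, completing the argument. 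The one substantive step is the appeal to Lusin--Souslin; everything else is bookkeeping around the null set $D$, and I expect no further obstacle.
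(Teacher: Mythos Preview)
Your argument is correct. The paper itself does not supply a proof of this lemma; it is stated with a citation to Antoniouk et al.\ \cite[Theorem 4.2]{antoniouk_et_al_2013} and used as a black box. Your approach via the Lusin--Souslin theorem is exactly the standard one and is in line with the proof in the cited reference: restrict to the full-measure Borel set on which $\mathbf{X}$ is injective, push Borel sets forward to Borel sets, and pull back. One small remark: in your verification of $B\setminus A\subset D$ you use only the injectivity of $\mathbf{X}|_{\Omega'}$, but in fact the stronger property you already recorded---that $\mathbf{X}^{-1}(\mathbf{X}(\omega'))=\{\omega'\}$ for every $\omega'\in\Omega'$---forces $\omega=\omega'$ outright, so $B\setminus A=\varnothing$ and $A\bigtriangleup B=A\setminus B\subset D$. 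This does not affect the validity of your proof.
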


Let $\Omega$ be a smooth manifold of dimension $k$.
Say that a subset $Q \subset \Omega$ \emph{has Lebesgue measure zero}, if for any local chart $\phi:\Omega \supset U \to \mathbb{R}^k$ in $\Omega$ the set $\phi(Q\cap U)$ has Lebesgue measure zero in $\mathbb{R}^k$.
Notice that there is no natural definition of a set of \emph{fixed positive Lebesgue measure}.

A measure $\mu$ on $\mathcal{B}(\Omega)$ will be said \emph{Lebesgue absolutely continuous} if $\mu(Q)=0$ for each subset $Q\subset\Omega$ of measure zero.

\begin{theorem}\label{th:noninj_set}{\rm (Antoniouk et al.~\cite[Theorem 2.13]{antoniouk_et_al_2013})}
Let $\Omega$ be a smooth manifold of dimension $k$ and $\mu$ be a Lebesgue absolutely continuous measure on $\mathcal{B}(\Omega)$.
For each $n\in\mathbb{N}$ let
\begin{equation*}
\mathcal{V}_{n} = \{ \mathbf{X} \in C^{\infty}(\Omega,\mathbb{R}^n) \mid N_{\mathbf{X}}\in\mathcal{B}(\Omega), \ \mu(N_{\mathbf{X}})=0\}.
\end{equation*}
If $n>k$, then $\mathcal{V}_{n}$ is residual in $C^{\infty}(\Omega,\mathbb{R}^n)$.
Hence $\mathrm{ard}_{\mu}(T) \leq k+1$ for any (not necessarily continuous) $\mu$-preserving map $T:\Omega\to\Omega$.
\end{theorem}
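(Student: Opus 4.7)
The plan is to prove the residuality of $\mathcal{V}_n$ for $n>k$ by means of multi-jet transversality, and then to deduce the bound on $\mathrm{ard}_{\mu}(T)$ by applying the preceding lemma of Antoniouk et al.~to a single element of $\mathcal{V}_{k+1}$.

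First I would set up the relevant incidence construction. For $\mathbf{X}\in C^\infty(\Omega,\mathbb{R}^n)$, consider
\begin{equation*}
f_{\mathbf{X}}: (\Omega\times\Omega)\setminus\Delta_\Omega \to \mathbb{R}^n\times\mathbb{R}^n, \qquad f_{\mathbf{X}}(\omega_1,\omega_2)=(\mathbf{X}(\omega_1),\mathbf{X}(\omega_2)),
\end{equation*}
so that the off-diagonal self-intersection locus $S_{\mathbf{X}}=f_{\mathbf{X}}^{-1}(\Delta_{\mathbb{R}^n})$ projects onto $N_{\mathbf{X}}$ via the first coordinate. The evaluation $(\mathbf{X},\omega_1,\omega_2)\mapsto\mathbf{X}(\omega_1)-\mathbf{X}(\omega_2)$ is a submersion onto $\mathbb{R}^n$ off the diagonal, since one can independently add smooth bump perturbations to $\mathbf{X}$ supported near $\omega_1$ and near $\omega_2$ to realize any element of $\mathbb{R}^n$. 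This is exactly the input needed to apply Mather's $2$-fold multi-jet transversality theorem, which produces a residual subset $\mathcal{U}_n\subset C^\infty(\Omega,\mathbb{R}^n)$ in the Whitney topology consisting of those $\mathbf{X}$ for which $f_{\mathbf{X}}$ is transverse to the codimension-$n$ diagonal $\Delta_{\mathbb{R}^n}$.

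The next step is to translate transversality into a measure-theoretic statement on $N_{\mathbf{X}}$. For $\mathbf{X}\in\mathcal{U}_n$ the set $S_{\mathbf{X}}$ is either empty or a smooth submanifold of codimension $n$ in the $2k$-dimensional open manifold $(\Omega\times\Omega)\setminus\Delta_\Omega$, hence of dimension $2k-n<k$. Since $\Omega$ is second countable, this open set is $\sigma$-compact, so $S_{\mathbf{X}}$ is a countable union of compacta and its continuous image $N_{\mathbf{X}}=\pi_1(S_{\mathbf{X}})$ is $F_\sigma$, in particular Borel. The classical fact that the image of a smooth manifold of dimension strictly less than $k$ under a smooth map into the $k$-manifold $\Omega$ has Lebesgue measure zero (a consequence of Sard's theorem, or a direct local-chart and Fubini computation) then gives that $N_{\mathbf{X}}$ is Lebesgue-null. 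By Lebesgue absolute continuity of $\mu$ we conclude $\mu(N_{\mathbf{X}})=0$, so $\mathcal{U}_n\subset\mathcal{V}_n$ and hence $\mathcal{V}_n$ is residual.

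To finish, I would take $n=k+1>k$, pick any $\mathbf{X}\in\mathcal{V}_{k+1}$, and invoke the preceding lemma (Antoniouk et al.~\cite[Theorem 4.2]{antoniouk_et_al_2013}) with $D=N_{\mathbf{X}}$ to obtain $\sigma(\mathbf{X})\overset{\mu}{=}\mathcal{B}(\Omega)$. Then for any $\mu$-preserving $T:\Omega\to\Omega$,
\begin{equation*}
\sigma(\Lambda_{\mathbf{X},T,\infty}) \supset \sigma(\mathbf{X}) \overset{\mu}{\supset} \mathcal{B}(\Omega),
\end{equation*}
and $\mathrm{ard}_{\mu}(T)\leq k+1$ follows directly from Definition \ref{def:ard}. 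The main obstacle I anticipate is the technically careful invocation of $2$-fold multi-jet transversality in the Whitney $C^\infty$ topology and the verification that the off-diagonal self-intersection set is genuinely Borel; once these ingredients are in place, the dimension-based measure estimate and the reduction to the preceding lemma are routine.
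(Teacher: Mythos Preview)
The paper does not supply its own proof of this theorem: it is quoted verbatim from Antoniouk et al.~\cite[Theorem~2.13]{antoniouk_et_al_2013} and left without argument, so there is nothing in the present paper to compare your proposal against directly.

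That said, your outline is sound and is essentially the standard route to such a statement (and, as far as the structure can be inferred from the way the result is framed here and in the cited source, it matches the intended argument). The key steps---applying $2$-fold multi-jet transversality to obtain a residual set $\mathcal{U}_n$ on which $f_{\mathbf{X}}\pitchfork\Delta_{\mathbb{R}^n}$, reading off $\dim S_{\mathbf{X}}=2k-n<k$, using $\sigma$-compactness to get $N_{\mathbf{X}}\in F_\sigma\subset\mathcal{B}(\Omega)$, and then the Sard-type dimension argument to conclude Lebesgue-nullity---are all correct and fit together cleanly. Your final deduction of $\mathrm{ard}_\mu(T)\le k+1$ via the preceding lemma with $D=N_{\mathbf{X}}$ is exactly how the paper intends the theorem to be used. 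One small point worth making explicit when you write it up: the projection $\pi_1|_{S_{\mathbf{X}}}$ is smooth because it is the restriction of a smooth map to a smooth submanifold, which is what licenses the measure-zero-image step; you gesture at this but it should be stated.
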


\subsection{Comparison of results}
It is convenient to compare these results in the following table, where it is assumed that $\Omega$ is a Polish space of dimension $k$.\vspace{3mm}

\begin{center}
\begin{tabular}{|p{1.6cm}|p{2.3cm}|p{3cm}|c|c|} \hline
\centering$\Omega$ & \centering $\mu$ & \centering $T$ & \centering $\mathrm{ard}_{\mu}(T)$ & Statement \\ \hline
\centering Borel space        &
\centering any measure        &
\centering any $\mu$-preserving measurable map &
$\leq 2k+1$             &
Lemma \ref{lm:emb_th_ard_2k1} \\ \hline
\centering Smooth manifold    &
\centering Lebesgue absolutely continuous &
\centering any $\mu$-preserving measurable map &
$\leq k+1$  &
Theorem \ref{th:noninj_set} \\ \hline
\centering Smooth manifold    &
\centering any measure preserved by $T$ &
\centering generic diffeomorphism &
$1$  &
Theorem \ref{th:keller_diff} \\ \hline
\end{tabular}
\end{center}
\vspace{5mm}

\medskip
Received xxxx 20xx; revised xxxx 20xx.
\medskip

\end{document}